\newtheorem{thm}{Theorem}[section]
\newtheorem{lem}[thm]{Lemma}
\newtheorem{prop}[thm]{Proposition}
\newtheorem*{pr}{Proposition}
\theoremstyle{definition}
\theoremstyle{remark}
\newtheorem{example}[thm]{Example}
\numberwithin{equation}{section}
\newcommand{\Z}{\mathbb{Z}}
\newcommand{\ZZ}{\Z}
\newcommand{\RR}{\mathbb{R}}
\newcommand{\TT}{\mathbb{T}}
\newcommand{\CC}{\mathbb{C}}
\newcommand{\DD}{D^{c}_{\mu\nu}}
\newcommand{\Id}{\operatorname{Id}}
\newcommand{\EE}{E^c_{\mu\nu}}
\newcommand{\YM}{\operatorname{YM}}
\newcommand{\Tr}{\operatorname{Tr}}
\newcommand{\id}{\operatorname{id}}
\title{Yang-Mills connections and sigma-models on quantum Heisenberg manifolds}
\author{Stine Marie Berge, Sooran Kang and Franz Luef}
\begin{document}

\maketitle

\begin{abstract}
We construct a spectral triple on a quantum Heisenberg manifold, which generalizes the results of Chakraborty and Shinha, and associate to it an energy functional on the set of projections, following the approach of Mathai-Rosenberg to non-linear sigma models. The spectral triples that we construct extend the  We derive a lower bound for this energy functional that is linked on the topological charge of the projection which depends on the curvature of a compatible connection. A detailed study of this lower bound is given for the Kang projection in quantum Heisenberg manifolds. These results display an intriguing interplay between non-linear sigma models and Yang-Mills theory on quantum Heisenberg manifolds, unlike in the well-studied case of noncommutative tori.
\end{abstract}
\section{Introduction}
The present paper points out an intriguing relation between Yang-Mills theory and non-linear sigma models for quantum Heisenberg manifolds. 
Yang-Mills theory is a cornerstone of modern physics \cite{YaMi} and has great importance in mathematics as well. Let us just mention the groundbreaking work of Donaldson \cite{Don83,Don87} on the geometry of four-dimensional manifolds and Yang-Mills theory. Extensions of the Yang-Mills theory  to the realm of noncommutative manifolds have been developed by Connes and Rieffel in the seminal work \cite{CR}, and by Dabrowski, Krajewski and Landi in \cite{DKL00,DKL03}. 
Non-linear sigma models originated in a quantum field theory. In short, non-linear sigma models describes a scalar field that takes on values in a Riemann manifold called the target manifold. These objects are of great relevance for several areas of physics, see \cite{Lind}. 

Yang-Mills theory and sigma models for noncommutative manifolds have been extensively studied for noncommutative tori \cite{CR,DKL00}. This is mainly due to the control over the vector bundles over noncommutative tori, also called Heisenberg modules, developed in the seminal work of Connes \cite{co80} and Rieffel's contribution in \cite{ri88}. Quantum Heisenberg manifolds (QHMs), introduced by Rieffel in \cite{Rie3}, is another class of $C^*$-algebras denoted by $\DD$. For $\DD$ we have  access to a class of projective modules $\Xi$ due to the substantial contributions of Abadie \cite{Ab2,Ab3,Ab5,AE}.

During the last decade numerous researchers have developed a Yang-Mills theory for projective modules over QHMs \cite{Ch1,ChG,ChS,Kang1,Lee}. This is for two reasons more involved than the case of noncommutative tori: 
\begin{inparaenum}[(i)]
\item QHMs are not finitely generated, and 
\item the existence of connections of non-constant curvature on vector bundles over a QHM.
\end{inparaenum}
In \cite{KLP}, two of the authors constructed a number of Yang-Mills connections for QHMs, demonstrating the intricate nature of Yang-Mills theory in this case.  

Sigma models for noncommutative manifolds are based on energy functionals on the set of projections of the $C^*$-algebra \cite{DKL00,MaRos}. A striking feature of the sigma model for noncommutative tori is a lower bound of the energy functional in terms of the first Connes-Chern number of a projection \cite{DKL00}. In the classical context, this was first noted in \cite{BePo75}. Recent progress on sigma models for noncommutative tori has been due to the interpretation of generators of Heisenberg modules as Gabor frames \cite{AE,Luef}. Hence, one can apply results from Gabor frames, such as the duality principle and examples of Gabor frames, to construct new solitons over noncommutative tori \cite{DLL,DJLL}. The relevance of Gabor frames in this context has also been used by Lee in his work on sigma model solitons \cite{Lee18,Lee20}. 

The non-linear sigma model for noncommutative tori in \cite{DKL00} is based on an energy functional for the set of projections. This generalizes the Dirichlet energy functional in the setting of Riemannian manifolds. Mathai and Rosenberg have constructed sigma models for noncommutative manifolds based on a spectral triple \cite{MaRos}. This construction for the standard spectral triple for noncommutative tori produces the same energy functional as in \cite{DKL00}.
Recall that a spectral triple, introduced by Connes, is a natural noncommutative generalization of a compact manifold with certain summability properties. In the case of compact manifolds, the summability properties correspond to the dimension of the manifold, see \cite{C2}. Spectral triples for $C^*$-algebras have greatly advanced the theory of noncommutative geometry due to its numerous applications in mathematics and physics, e.g.\ index theory. 

We aim to study sigma models for quantum Heisenberg manifolds. The energy functional for the set of projections in QHMs is based on the construction in \cite{MaRos} for the spectral triple on QHMs proposed in \cite{ChS}. Since we have not been able to follow the arguments in \cite{ChS} concerning the spectral triple, we have included our own approach to the construction of spectral triples on QHMs. 
Consequently, we obtain an energy functional
\begin{equation}\label{eq:energy_functional}
S(p)=2 \,\tau_D\big((\delta_X p)^2 + (\delta_Y p)^2+ (\delta_Z p)^2\big),
\end{equation}
on the set of projections in QHMs, see Section \ref{sec:prelim} for undefined notions.

We find, as in the case of noncommutative tori, that the energy functional \eqref{eq:energy_functional} has a lower bound in terms of a topological charge of a projection. This lower bound has a very different flavor than the one obtained for Heisenberg modules over noncommutative tori in \cite{DKL00,DLL,DJLL} since we have to handle connections with non-constant curvature. Consequently, this leads to a lower bound depending on the connection. Concretely, we have the following result. 
\begin{pr}
Let $R\in \Xi$ be the element that gives the Grassmannian connection in \eqref{eq:Gr-conn} such that $\langle R, R \rangle_R^D=Q$ and $\langle R, R \rangle_L^E=\Id_E$. Let $\nabla$ be a compatible connection with curvature $\Theta_\nabla$.
Then the topological charge of the projection $Q$ is given by
\[
c_\nabla(Q)=-\frac{1}{2\pi i} \tau_E(\langle R, (\Theta_\nabla(X,Y) +\nabla_{[X,Y]} +\Theta_\nabla(X,Z)+\Theta_\nabla(Y,Z))\cdot R\rangle_L^E).
\]
\end{pr}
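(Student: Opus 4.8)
The plan is to compute the topological charge directly from the definition of $c_\nabla(Q)$ as (a multiple of) the trace of the curvature of a compatible connection, paired over the three coordinate directions, and then to rewrite this trace on the left $E$-valued inner product by using $R$ as a cyclic generator. I would first recall that, because $\langle R,R\rangle_R^D=Q$, the projection $Q$ is the range projection of $R$ and the Grassmannian connection $\nabla^{\mathrm{Gr}}$ of \eqref{eq:Gr-conn} is obtained by compressing the canonical derivations $\delta_X,\delta_Y,\delta_Z$ with $Q$. Its curvature components $\Theta_{\mathrm{Gr}}(V,W)=[\nabla^{\mathrm{Gr}}_V,\nabla^{\mathrm{Gr}}_W]-\nabla^{\mathrm{Gr}}_{[V,W]}$ can then be written explicitly in terms of $Q$, $\delta_V Q$ and $\delta_W Q$, and these assemble into the Connes--Chern representative defining $c_\nabla(Q)$.

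The algebraic heart of the statement is the Heisenberg bracket. Using the Lie relations $[X,Z]=[Y,Z]=0$ and $[X,Y]$ proportional to $Z$ (so that $[\delta_X,\delta_Y]=\delta_Z$ up to a constant), I would observe that among the three pairs only $(X,Y)$ produces a nonvanishing bracket correction. Writing each curvature in the form $\Theta_\nabla(V,W)=[\nabla_V,\nabla_W]-\nabla_{[V,W]}$ and summing, the combination $\Theta_\nabla(X,Y)+\nabla_{[X,Y]}+\Theta_\nabla(X,Z)+\Theta_\nabla(Y,Z)$ collapses to the sum of the three pure commutators $[\nabla_X,\nabla_Y]+[\nabla_X,\nabla_Z]+[\nabla_Y,\nabla_Z]$. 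This explains the presence of the otherwise mysterious term $\nabla_{[X,Y]}$ in the statement, and it is exactly the feature that separates the quantum Heisenberg case from the noncommutative torus, where all brackets vanish.

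To reach the displayed right-hand side I would transfer the computation to the imprimitivity bimodule $\Xi$. Since $\nabla$ is compatible with the right $D$-valued inner product, its curvature is valued in $\operatorname{End}_D(\Xi)\cong E$ and acts on $\Xi$ from the left, so that $\Theta_\nabla(V,W)\cdot R$ makes sense. The normalization $\langle R,R\rangle_L^E=\Id_E$ makes $R$ a cyclic vector through which the trace on $E$ is read off as $A\mapsto\tau_E(\langle R,A\cdot R\rangle_L^E)$, and the trace compatibility of the Morita equivalence, $\tau_D(\langle\xi,\eta\rangle_R^D)=\tau_E(\langle\eta,\xi\rangle_L^E)$, converts the $\tau_D$-trace of the $D$-side curvature into the stated $\tau_E$-expression. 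Keeping track of the antisymmetrization that turns ordered commutators into two-form components and of the normalizing factor then yields the prefactor $-\tfrac{1}{2\pi i}$.

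The main obstacle will be the trace transfer in the non-finitely generated setting of QHMs: unlike for the noncommutative torus, one cannot appeal to a finite-rank trace, so the curvature operators must be shown to lie in the domain where $\tau_E(\langle R,\cdot\,R\rangle_L^E)$ is finite and where cyclicity $\tau_E([\cdot,\cdot])=0$ together with $\tau_E\circ\delta_W=0$ may legitimately be invoked. Equally delicate is the bookkeeping of adjoints and signs while interchanging the two inner products, which must be arranged so that precisely $\nabla_{[X,Y]}$---and no spurious $Z$-correction attached to the $(X,Z)$ or $(Y,Z)$ terms---survives; this is where the specific structure of the Heisenberg Lie algebra and the choice of $R$ defining the Grassmannian connection enter decisively.
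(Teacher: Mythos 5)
Your proposal is correct and takes essentially the same route as the paper: the paper likewise decomposes $c_\nabla(Q)$ into the three charges $c_{XY}$, $c_{XZ}$, $c_{YZ}$, converts each one (by adapting Proposition~3.2 of Dabrowski--Landi--Luef \cite{DLL}, using the compatibility of $\nabla$ with both inner products and the trace correspondence $\tau_D(\langle\xi,\eta\rangle_R^D)=\tau_E(\langle\eta,\xi\rangle_L^E)$) into $-\frac{1}{2\pi i}\,\tau_E(\langle R,[\nabla_V,\nabla_W]\cdot R\rangle_L^E)$, and then uses the Heisenberg relations $[X,Y]=cZ$, $[X,Z]=[Y,Z]=0$ to rewrite the pure commutators as the curvature terms plus the single correction $\nabla_{[X,Y]}$. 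Your observation that only the $(X,Y)$ pair contributes a bracket term is exactly the paper's closing step, read in the opposite direction.
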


This lower bound has quite intriguing features based on the Yang-Mills theory of QHMs, which in a novel way links sigma models and Yang-Mills theories for QHMs. We compute this lower bound for some of the Yang-Mills connections discussed in \cite{KLP} using the Kang projection in QHMs \cite{Kang1} which is compactly supported. Note that this connection between Yang-Mills theory and sigma models does not appear for noncommutative tori, since in this case all Yang-Mills connections have constant curvature. 
\\~~\\
{\bf Acknowledgment:} Sooran Kang was supported by the National Research Foundation of Korea (NRF) grant funded by the Korea government (MIST) (No.~2017R1D1A1B03034697, No.~2020R1F1A1A01076072). 

\section{Preliminaries}\label{sec:prelim}

\subsection{Quantum Heisenberg manifolds} \label{subsec:prel-QHMs}
In this section, we briefly review the finitely projective module $\Xi$ over the quantum Heisenberg manifold $\DD$ constructed by Abadie \cite{Ab1}. We also discuss several compatible linear connections on $\Xi$ given in \cite{Kang1, Lee, KLP}.

Let $M=\RR\times\TT$ and denote by $C_b(M)$ the set of continuous bounded functions on $M$. Consider the commuting actions of $\mathbb{Z}$ on $M$ denoted by $\lambda$ and $\sigma$ defined by
\begin{equation*}
\lambda_p(x,y)=(x+2 p\mu,y+2 p\nu)\quad\text{and}\quad
\sigma_p(x,y)=(x-p,y),
\end{equation*}
where  $\mu, \nu \in \mathbb{R}$ and $p\in \mathbb{Z}$.
Then construct the crossed product $C^{\ast}$-algebras $C_b(M)\rtimes_{\lambda}\mathbb{Z}$ and $C_b(M)\rtimes_{\sigma}\mathbb{Z}$ with their usual star-product and involution. In the case of the algebra $C_b(M)\rtimes_{\lambda}\mathbb{Z}$ the star-product and involution become
\[\begin{split}
(\Phi \ast \Psi)(x,y,p)=\sum_{q\in \ZZ} \Phi(x,y,q) \Psi(x-2q \mu,y- 2 q \nu, p-q),\\
\Phi^\ast(x,y,p)=\overline{\Phi}(x-2 p \mu, y-2 p \nu, -p),
\end{split}\]
for $\Phi, \Psi\in C_b(M)\rtimes_{\lambda}\mathbb{Z}$.

On the algebras $C_{b}(M)\rtimes_{\lambda}\mathbb{Z}$ and $C_{b}(M)\rtimes_{\sigma}\mathbb{Z}$ we let $\rho$ and $\gamma$ denote the actions of $\mathbb{Z}$ defined on \[\Phi \in C_c(M\times \mathbb{Z})=\{\Phi\in C_b(M\times\ZZ)\colon \Phi\text{ compact in }\ZZ\}\] 
by
\begin{equation}\label{eq:fixed-action}
(\rho_k\Phi)(x,y,p)=\overline{e}(ckp(y- p\nu))\Phi(x+k,y,p),
\end{equation}
\begin{equation*}
(\gamma_k\Phi)(x,y,p)=e(cpk(y- k\nu))\Phi(x-2 k\mu,y-2 k\nu, p),
\end{equation*}
 where $k, p \in \mathbb{Z}$ and $e(x)=\exp(2\pi ix)$ for $x\in \RR$.
The actions $\rho$ and $\gamma$ can be shown to be proper.
Denote by $D_0$ the $\ast$-subalgebra in the multiplier algebra of $C_{b}(M)\rtimes_{\lambda}\mathbb{Z}$ consisting of functions $\Phi \in C_c(M\times \mathbb{Z})$ that satisfies $\rho_k(\Phi)=\Phi$ for all $k \in \mathbb{Z}$. The generalized fixed point algebra of $C_{b}(M)\rtimes_{\lambda}\mathbb{Z}$ by the action $\rho$ is denoted by $\DD$ and is the closure of the $\ast$-subalgebra $D_0$. 
Similarly, the generalized fixed point algebra of $C_{b}(M)\rtimes_{\sigma}\mathbb{Z}$ by the action $\gamma$ is denoted by $E^{c}_{\mu\nu}$. The algebra $E^{c}_{\mu\nu}$ is the closure of the $\ast$-subalgebra $E_0$ in the multiplier algebra of $C_{b}(M)\rtimes_{\sigma}\mathbb{Z}$ consisting of functions $\Psi \in C_c(M\times \mathbb{Z})$ with compact support on $\mathbb{Z}$ and that satisfies $\gamma_k(\Psi)=\Psi$ for all $k \in \mathbb{Z}$.
 
According to the general theory of representations in \cite[Sec.~7.7]{Ped} the induced representation $\pi$ of $\DD$ on $\mathcal{H}=L^2(M\times \ZZ)$ is given by
\begin{equation}\label{eq:hilbert_space_rep}
(\pi(\Phi) \xi) (x,y,p)= \sum_{q\in \ZZ} \Phi(x-2 p \mu, y-2 p \nu, q) \xi(x,y,p-q)
\end{equation}
for $\xi\in L^2(M\times \ZZ)$ and $\Phi\in C_c^\infty (M\times \ZZ)$. Note that Rieffel's representation in [22] differs from the one used here. 

According to the main theorem in \cite{Ab1}, these generalized fixed point algebras $D^{c}_{\mu\nu}$ and $E^{c}_{\mu\nu}$ are strongly Morita equivalent. Let $\Xi$ be the left-$E^{c}_{\mu\nu}$ and right-$D^{c}_{\mu\nu}$ bimodule constructed as follows: The bimodule $\Xi$ is the completion of $C_c(M)$ with respect to either one of the norms induced by one of the $D^{c}_{\mu\nu}$- and $E^{c}_{\mu\nu}$-valued inner products, $\langle\cdot,\cdot\rangle_R^D$ and $\langle\cdot,\cdot\rangle_L^E$ respectively. The inner products are given by
\begin{equation}\label{D-value-inner}
\langle f,g \rangle_R^D(x,y,p)=\sum_{k \in \mathbb{Z}}\overline{e}(ckp(y- p\nu))f(x+k,y)\overline{g}(x-2 p\mu+k,y-2p\nu)\,,
\end{equation}
\begin{equation}\label{E-value-inner}
\langle f,g \rangle_L^E(x,y,p)=\sum_{k \in \mathbb{Z}}e(cpk(y- k\nu))\overline{f}(x-2 k\mu,y-2 k\nu)g(x-2 k\mu+p,
y-2 k\nu),
\end{equation}
 where $f,g \in C_c(M)$ and $p \in \mathbb{Z}$.
Also the left and right action of $E^{c}_{\mu\nu}$ and $D^{c}_{\mu\nu}$ on $\Xi$ are given by
\begin{equation*}\label{left-action}
(\Psi\cdot f)(x,y)=\sum_{q \in\mathbb{Z}}\overline{\Psi}(x,y,q)f(x+q,y),
\end{equation*}
\begin{equation*}\label{right-action}
(g\cdot\Phi)(x,y)=\sum_{q\in\mathbb{Z}}g(x+2 q\mu,y+2 q\nu)\overline{\Phi}(x+2 q\mu,y+2 q\nu,q),
\end{equation*}
for $\Psi \in \EE$ , $\Phi \in \DD$, and $f, g \in \Xi$. Moreover, for $f_1, f_2, f_3\in \Xi$ we have
\begin{equation}\label{eq:bimodule-asso}
\langle f_1, f_2 \rangle_L^E \cdot f_3= f_1 \cdot \langle f_2, f_3 \rangle_R^D.
\end{equation}
%Since $\hslash$ is the Planck constant, we let $\hslash=1$ from now on for notational convenience.

Let $H$ be the \textit{reparametrized Heisenberg group} given by Rieffel in \cite{Rie3} as follows: For $x,y,z\in \RR$ and a positive integer $c$, let
\[
(x,y,z):=\begin{pmatrix} 1 & y & z/c \\ 0 & 1 & x \\ 0 & 0 & 1 \end{pmatrix}.
\]
We can identify $H$ with $\RR^3$ equipped with the product
\[
(x,y,z)(x'y'z')=(x+x', y+y', z+z'+cyx').
\]
Then the action $L$ of $H$ on the quantum Heisenberg manifold $\DD$ is given by
 \begin{equation}\label{eq:action_L}
(L_{(r,s,t)}\Phi)(x,y,p)=e(p(t+cs(x-r-p\mu)))\Phi(x-r,y-s,p).
 \end{equation}
 We denote by $(\DD)^\infty \subset \DD$ the smooth subalgebra given by
 \[
 (\DD)^\infty=\{\Phi\in \DD: h\mapsto L_h(\Phi)\;\;\text{is smooth in norm for $h\in H$}\},
 \]
 The infinitesimal form of $L$ gives an action $\delta$ of the corresponding Heisenberg Lie algebra $\mathfrak{h}$ on $(\DD)^\infty$. In particular, we let $X, Y, Z$ be the basis of $\mathfrak{h}$ given by
 \begin{equation}\label{eq:Lie-alg}
 X=(0,1,0)=\begin{pmatrix} 0 & 1& 0 \\ 0 & 0 & 0 \\ 0 & 0 & 0 \end{pmatrix}, \;\; Y=(1,0,0)=\begin{pmatrix} 0 & 0 & 0 \\ 0 & 0 & 1 \\ 0 & 0 & 0 \end{pmatrix}, \;\; Z=(0,0,1)=\begin{pmatrix} 0 & 0 & 1/c \\ 0 & 0 & 0 \\ 0 & 0 & 0 \end{pmatrix}
 \end{equation}
 and then we have $[X,Y]=cZ$. The corresponding derivations on $(\DD)^\infty$ are given by
 \begin{equation}\begin{split}\label{eq:deri}
& \delta_X(\Phi)(x,y,p)=2\pi icp(x-p\mu)\Phi(x,y,p)-\frac{\partial \Phi}{\partial y}(x,y,p),\\
& \delta_Y(\Phi)(x,y,p)=-\frac{\partial \Phi}{\partial x}(x,y,p),\\
& \delta_Z(\Phi)(x,y,p)=2\pi ip\,\Phi(x,y,p),
\end{split}\end{equation}
for $\Phi\in (\DD)^\infty$. For an arbitrary $V=c_1X+c_2Y+c_3Z\in \mathfrak{h}$ we define the derivation $\delta_V$ by \[\delta_V=c_1\delta_X+c_2\delta_Y+c_3\delta_Z.\]

\subsection{Yang-Mills equations on the QHMs}\label{subsec:prel-YM}

According to Lemma~1 of \cite{C1}, for the left-$\EE$ and right-$\DD$ projective bimodule $\Xi$, there is a dense left-$(\EE)^\infty$ and right-$(\DD)^\infty$ submodule $\Xi^\infty$ of $\Xi$. For notational simplicity we omit $\infty$ from smooth spaces of $C^*$-algebras and projective modules over them from now on. 

Recall that $\mathfrak{h}$ denotes the Lie algebra of the Heisenberg group. We say that a linear map $\nabla:\Xi \to \Xi \otimes \mathfrak{h}$ is a \textit{linear connection} if it satisfies
\[
\nabla_V(\xi \cdot \Phi)=(\nabla_V(\xi))\cdot \Phi + \xi \cdot (\delta_V(\Phi)),
\]
for all $V\in \mathfrak{h}$, $\xi\in \Xi$ and $\Phi\in \DD$. We say that the linear connection $\nabla$ is \textit{compatible with respect to the Hermitian metric} $\langle \cdot, \cdot \rangle_R^D$ if 
\begin{equation}\label{eq:conne-comp-right}
\delta_V(\langle \xi,\eta\rangle_R^D)=\langle \nabla_V \xi, \eta\rangle_R^D + \langle \xi, \nabla_V \eta\rangle_R^D,
\end{equation}
for $\eta, \xi \in \Xi$.
Likewise, we say that $\nabla$ is \textit{compatible with respect to the Hermitian metric} $\langle \cdot, \cdot \rangle_L^E$ if 
\begin{equation}\label{eq:conne-comp-left}
\delta_V(\langle \xi,\eta\rangle_L^E)=\langle \nabla_V \xi, \eta\rangle_L^E + \langle \xi, \nabla_V \eta\rangle_L^E.
\end{equation}
If $\nabla $ is compatible with both Hermitian metrics, we will simply refer to $\nabla$ as a \textit{compatible connection}.
Then the \textit{curvature} of a compatible linear connection $\nabla$ is defined to be the alternating bilinear form $\Theta_\nabla$ on $\mathfrak{h}$, given by
\[
\Theta_\nabla(V,W)=\nabla_V \nabla_W - \nabla_W \nabla_V -\nabla_{[V,W]}
\]
for $V,W\in \mathfrak{h}$. From now on, we say ``connection'' when we mean ``linear connection''. 
We denote the set of compatible linear connections on $\Xi$ by $CC(\Xi)$. 

To define the Yang-Mills functional $\YM$ on $CC(\Xi)$, we need the notion of trace. Let $\tau_D$ be a faithful $L$-invariant trace on $\DD$, where $L$ is the action \eqref{eq:action_L}. Using $\tau_D$, we define the trace $\tau_E$ on $\EE$ by 
\begin{equation}
\tau_E(\langle \xi,\eta\rangle_L^E)=\tau_D(\langle \eta,\xi\rangle_R^D).
\end{equation}
According to \cite{Rie1}, there is a faithful trace on $\DD$ given by
\[
\tau_D(\Phi)=\int_{\TT^2} \Phi(x,y,0)\, dx\, dy
\]
for $\Phi\in \DD$, and hence the corresponding  $\tau_E$ can by computed as  
\begin{equation}\label{eq:trace_E}
\tau_E(\Psi)= \int_0^{2\mu}\int_0^1 \Psi(x,y,0)\, dx\, dy
\end{equation}
for $\Psi \in (\EE)_0$.

The \textit{Yang-Mills functional} $\YM$ is defined on $CC(\Xi)$ by
\[
\YM(\nabla)=-\tau_E(\{\Theta_\nabla,\Theta_\nabla\}_E),
\]
where $\{\cdot,\cdot\}_E$ is a bilinear form given by
\[
\{\alpha, \beta\}_E=\sum_{i<j}\alpha(Z_i,Z_j)\beta(Z_i,Z_j),
\]
for alternating $\EE$-valued 2-forms $\alpha, \beta$ and $\{Z_1,Z_2,Z_3\}$ is a basis for $\mathfrak{h}$. 
We are interested in the nature of the set of connections where $\YM$ attains its minimum and the set of critical points of $\YM$. In particular, we are interested in compatible connections which are both critical points and minimizers, called Yang-Mills connections. To be precise, we say that a compatible connection $\nabla$ is a \textit{global minimizer of $\YM$} if \[\YM(\nabla)\le \YM(\nabla')\]
for any other connection $\nabla'\in CC(\Xi)$. We say that $\nabla$ is a \textit{local minimizer of $\YM$ subject to the constant curvature constraint} if \[\YM(\nabla)\le \YM(\nabla')\] for any other connection $\nabla'$ with constant curvature.
According to \cite[Theorem~1.1]{Rie5-cp} and  \cite[Section 5]{Kang1}, a compatible connection $\nabla$ on $\Xi$ with curvature $\Theta_\nabla$ is a critical point of $\YM$ exactly when $\nabla$ satisfies the following equations:
\begin{equation}\label{eq:critical_point}
\begin{split}
&(1)\;[\nabla_Y, \Theta_\nabla(X,Y)]+[\nabla_Z,\Theta_\nabla(X,Z)]=0,\\
&(2)\;[\nabla_X,\Theta_\nabla(Y,X)]+[\nabla_Z,\Theta_\nabla(Y,Z)]=0,\\
&(3)\;[\nabla_X,\Theta_\nabla(Z,X)]+[\nabla_Y, \Theta_\nabla(Z,Y)]-c\Theta_\nabla(X,Y)=0.
\end{split}\end{equation}

There are several connections with specific formulas found in \cite{Kang1}, \cite{Lee} and \cite{KLP}. The rest of this section will be used to go through different explicit connections.
The most natural and standard connection is the \textit{Grassmannian connection}. For our case, the Grassmannian connection $\nabla^G$ on $\Xi$ is given by, for all $V\in \mathfrak{h}$
\begin{equation}\label{eq:Gr-conn}
\nabla^G_V(\xi)=R\cdot \delta_V(\langle R, \xi\rangle_R^D),
\end{equation}
where $R\in \Xi$ is chosen to satisfy $\langle R, R\rangle_R^D$ is a projection in $\DD$ and $\langle R, R\rangle_L^E=\id_E$, where $\Id_E(x,y,p)=\delta_0(p)$ is the multiplicative identity.

For later convenience, we describe the special choice function $R$ given in \cite{Kang1}. On the interval $(-2\mu, 0]$ define $R$ to be $0$ on $(-2 \mu, - \mu)$, smooth on $(- \mu, -\frac{1}{2} \mu)$ and $1$ on $[-\frac{1}{2} \mu, 0]$, where $| \mu|<\frac{1}{4}$. and define $R$ on $[0, 2 \mu)$ by 
\[R(x)=\sqrt{1-|R(x-2 \mu)|^2}.\] Elsewhere, the function $R$ is defined to be zero, making $R$ compactly supported and smooth. Moreover, because of the condition $\langle R,R\rangle_R^D$ is projection and $\langle R, R\rangle_L^E=\Id_E$, $R$ satisfies the equations (B-1), (B-2), and (B-3) in \cite{Kang1}. This implies that for $x\in [-2\mu,2\mu]$ we have that
\begin{equation}\label{eq:R-cond}
(R(x))^2+(R(x-2 \mu))^2=1, \quad \text{and}\quad R(x+2 q \mu)= 0 \;\;\text{except for $q=0$ and $q=\pm 1$}
\end{equation}
The curvature for this choice of $R$ is given by 
\[\begin{split}
&\Theta_{\nabla^G}(X,Y)(x,y,p)=f_1(x)\delta_0(p),\\
&\Theta_{\nabla^G}(X,Z)(x,y,p)=0,\\
&\Theta_{\nabla^G}(Y,Z)(x,y,p)=f_2(x)\delta_0(p),
\end{split}\]
where $f_1$ and $f_2$ are smooth skew-symmetric periodic functions. It can be shown that $\nabla^G$ is neither a critical point nor a minimizer of $\YM$.

There are a couple of sets of connections that give rise to constant curvature found in \cite{Lee} and \cite{KLP}.
The connection $\nabla^0$ found in \cite{Lee} is given by
\begin{equation}\label{eq:Lee_conn}
\begin{split}
&(\nabla^0_X\xi)(x,y)=-\frac{\partial \xi}{\partial y}(x,y)+\frac{\pi ci}{2\mu}x^2f(x,y),\\
&(\nabla^0_Y\xi)(x,y)=-\frac{\partial \xi}{\partial x}(x,y),\\
&(\nabla^0_Z\xi)(x,y)=\frac{\pi i x}{\mu}\xi(x,y).
\end{split}
\end{equation}
The corresponding curvature $\Theta_{\nabla^0}$ is given by
\begin{equation}\label{eq:min_curv}
\Theta_{\nabla^0}(X,Y)=0,\;\;\Theta_{\nabla^0}(X,Z)=0,\;\;\Theta_{\nabla^0}(Y,Z)=\frac{\pi i}{\mu}\Id_E.
\end{equation}
Note that the formulas of $\nabla^0$ and $\Theta_{\nabla^0}$ here look a bit different from the ones in \cite{Lee} because the settings are slightly different.  A full detailed proof that $\nabla^0$ is a compatible connection with the above curvature is given in Appendix A of \cite{KLP}. Note that the connection $\nabla^0$ is a critical point of $\YM$ and a (local) minimizer of $\YM$ subject to the constant constraint. Hence $\nabla^0$ is a Yang-Mills connection subject to the constant constraint, for details see Theorem~4.3 in \cite{KLP}. 

According to Theorem~4.8 of \cite{KLP}, there is another set of Yang-Mills connections on $\Xi$ given by $\nabla^1=\nabla^0+\mathbb{H}$, where $\mathbb{H}$ is the linear map from $\mathfrak{h}$ to the set of skew-symmetric elements of $\EE$ given by 
\begin{equation}\label{eq:skew-H}
\begin{split}
&\mathbb{H}_X (x,y,p)= i\, g_1(y) \delta_0(p)\\
&\mathbb{H}_Y(x,y,p)= i\, g_2(x) \delta_0(p)\\
&\mathbb{H}_Z(x,y,p)=0,
\end{split}
\end{equation}
where $(x,y,p)\in M\times \ZZ$, and $g_1$, $g_2$ are real-valued differentiable functions satisfying $g_1(y)=g_1(y-2p\nu)$, $g_2(x)=g_2(x-2p\mu)$. The corresponding curvature $\Theta_{\nabla^1}$ is given by
\begin{equation}\label{eq:curv-nabla-1}
\Theta_{\nabla^1}(X,Y)=0,\quad \Theta_{\nabla^1}(X,Z)=0,\quad \Theta_{\nabla^1}(Y,Z)=\frac{\pi i}{\mu}\Id_E.
\end{equation}
There are many functions $g_1$ and $g_2$ making $\nabla^1$ into a connection with constant curvature. For examples, see Example~4.9 of \cite{KLP}.

There is a connection $\nabla^2$ found in \cite{KLP} with constant curvature, that neither is a minimum of $\YM$ nor a critical point. The connection $\nabla^2$ is given by
\begin{equation}\label{eq:nabla-2}
\begin{split}
(\nabla^2_Xf)(x,y)&=-\frac{\partial f}{\partial y}(x,y)+\Big(\frac{\pi c i}{2\mu}x^2-\nu i x+\mu i y\Big)f(x,y)\\
(\nabla^2_Yf)(x,y)&=-\frac{\partial f}{\partial x}(x,y)\\
(\nabla^2_Zf)(x,y)&=\frac{\pi i x}{\mu}f(x,y)
\end{split}
\end{equation}
Then the corresponding curvature $\Theta_{\nabla^2}$ is given by
\begin{equation}\label{eq:curv-nabla-2}
\Theta_{\nabla^2}(X,Y)=\nu i \Id_E,\quad \Theta_{\nabla^2}(X,Z)=0,\quad \Theta_{\nabla^2}(Y,Z)=\frac{\pi i}{\mu}\Id_E.
\end{equation}

The authors of \cite{KLP} found connections with non-constant curvature in Theorem~5.1 of \cite{KLP} by adding certain skew-symmetric elements $\mathbb{G}$ to $\nabla^0$ given by \eqref{eq:Lee_conn}. Let $\nabla^3=\nabla^0+\mathbb{G}$, where
\begin{equation}\label{eq:G}\begin{split}
&\mathbb{G}_X (x,y,p)= 0\\
&\mathbb{G}_Y(x,y,p)= 0\\
&\mathbb{G}_Z(x,y,p)=i \cos\Big(\frac{\alpha \pi x}{\mu}\Big)  \, \delta_0(p),
\end{split}
\end{equation}
for $\alpha \in \RR \setminus \negthickspace \{0\}$. Then the connection $\nabla^3$ gives rise to non-constant curvature where $\Theta_{\nabla^3}(X,Y)\ne 0$. In particular, the corresponding curvature $\Theta_{\nabla^3}$ is given by
\begin{equation}\label{eq:conn-nabla3}\begin{split}
&\Theta_{\nabla^3}(X,Y)=-c\,\mathbb{G}_Z \ne 0, \\
&\Theta_{\nabla^3}(X,Z)=[\nabla^0_X, \mathbb{G}_Z]=0, \\
&\Theta_{\nabla^3}(Y,Z)=\frac{\pi i}{\mu}\Id_E+[\nabla^0_Y, \mathbb{G}_Z]=\frac{\pi i }{\mu} \Id_E+\frac{\partial \mathbb{G}_Z}{\partial x} \ne 0.
\end{split}\end{equation}
 This shows that $\Theta_{\nabla^3}$ cannot be constant because $\mathbb{G}_Z(x,y,p)=i \cos(\frac{\alpha \pi x}{\mu})  \, \delta_0(p)$ is not constant for $\alpha \in \RR \setminus \negthickspace \{0\}$.  Moreover, Theorem~5.1 of \cite{KLP} says that  there exists a triple $(c,\mu,\alpha)$ with $c\in \Z^+, \mu\in (0,1/2], \alpha\in \RR\setminus \negthickspace \{0\}$ such that $\nabla^3$ is not a critical point of $\YM$. Additionally, for this choice of $c$, $\mu$ and $\alpha$ we have that $\nabla^3$ satisfies \[\YM(\nabla^3)  < \YM(\nabla^0)=\frac{2\pi^2}{\mu}.\]

\section{Spectral triples on the QHMs}
The goal of this section is to describe a class of spectral triples on QHMs. A large part of the section is devoted to showing that the objects defined are indeed spectral triples. These spectral triples are similar to the spectral triples given in \cite{ChS}, and the proofs in this section are inspired by their approach.

To begin, recall that an \textit{odd spectral triple} $(\mathcal{A},\mathcal{H},D)$ consists of a Hilbert space $\mathcal{H}$, an involutive unital algebra $\mathcal{A}$ of operators on $\mathcal{H}$, and a densely defined self-adjoint operator $D$ on $\mathcal{H}$ satisfying:
\begin{enumerate}[i)]
    \item $\|[a,D]\|<\infty$ for any $a\in \mathcal{A}$,
    \item $(D-\lambda)^{-1}$ is a compact operator for any $\lambda\in \CC\setminus\RR$.
\end{enumerate}
The operator $D$ will be referred to as the \textit{Dirac operator}. For a compact operator $K$ we denote by $\mu_m$ the $m$'th non-zero eigenvalue in decreasing order. Define the operator $|D|^{-n}$ for $n>0$ to be zero on $\ker(D)$ and the inverse of $|D|^{n}$ on the range of $D$. A odd spectral triple is said to be of \textit{dimension} $n>0$ if $|D|^{-n}$ is a compact operator with eigenvalues $\mu_m$ satisfying for a constant $C$ the inequality $\mu_m\le Cm^{-1}$. Equivalently, one can ask for the \textit{Dixmier trace} of $|D|^{-n}$ to be finite. Recall that for the Dixmier trace of a compact operator $K$ to be defined we need that
\[\lim_{N\to \infty}\frac{1}{\ln(N)}\sum_{m=0}^{N}\mu_m<\infty.\]
We will denote the ideal of all compact Dixmier trace class operators by $\mathcal{L}^{(1,\infty)}$.

Recall that the basis elements $X,Y, Z$ of $\mathfrak{h}$ are given in \eqref{eq:Lie-alg} and the derivations $\delta_X$, $\delta_Y$, and $\delta_Z$ are given in \eqref{eq:deri}. We denote the \textit{Pauli spin matrices} by 
\begin{equation}\label{eq:Pauli_matrix}
\sigma_X=\begin{pmatrix} 0 & 1 \\ 1 & 0 \end{pmatrix},\quad  \sigma_Y=\begin{pmatrix} 0 & -i \\ i & 0 \end{pmatrix},\quad\text{and}\quad\sigma_Z=\begin{pmatrix} 1 & 0 \\ 0 & -1 \end{pmatrix}.
\end{equation}

Let $\mathcal{A}\subset D_0$ be the set of all smooth functions in $D_0$. Note that $\mathcal{A}$ is a subalgebra of $\DD$. Additionally, one can view the condition $\rho(\Phi)=\Phi$ as a ``periodicity'' condition on $\mathcal{A}$. Define the space
\[V_b=\{\phi\in C_b([0,1]\times \mathbb{T}\times \mathbb{Z}) \colon \phi(0,y,p)=\Bar{e}(cp(y-p\nu))\phi(1,y,p)\}.\]
Then we have that 
$\left.\mathcal{A}\right|_{[0,1]\times \TT\times \ZZ}\subset V_b$. Additionally, for any $\phi \in \left.\DD\right|_{[0,1]\times \TT\times \ZZ}$ we can regain the function on $\mathcal{A}$ by the map
\[\tilde{\phi}(x,y,p)=e(c\lfloor x\rfloor(y-p\nu)) \phi(x-\lfloor x\rfloor, y,p).\]
Using this identification the derivations $\DD$ we define 
\[\tilde{\delta}_X\phi=\left.(\delta_X\tilde{\phi})\right|_{[0,1]\times\mathbb{T}\times\mathbb{Z}},
\quad \tilde{\delta}_Y\phi=\left.(\delta_Y\tilde{\phi})\right|_{[0,1]\times\mathbb{T}\times\mathbb{Z}},\quad \text{and } \tilde{\delta}_Y\phi=\left.(\delta_Y\tilde{\phi})\right|_{[0,1]\times\mathbb{T}\times\mathbb{Z}}.\]
To avoid notational clutter, we will often write $\delta$ instead of $\tilde{\delta}$.
By abuse of notation, we will use the same notation for $\mathcal{A}$ and $\left.\mathcal{A}\right|_{[0,1]\times\TT\times\ZZ}$.
Denote by $\pi$ the representation on $L^2(M\times \ZZ)$ given in \eqref{eq:hilbert_space_rep}. Using this representation we can define a representation $\tilde{\pi}$ of $\mathcal{A}$ acting on $\mathcal{H}=L^2([0,1]\times \TT\times \ZZ)$ by 
\[
(\tilde{\pi}(a) \xi) (x,y,p)= \sum_{q\in \ZZ} \tilde{a}(x-2p \mu, y-2 p \nu, q) \xi(x,y,p-q),%=(\tilde{\xi}*(\tilde{\Phi}^*))(x,y,p).
\]
for $a\in \mathcal{A}$ and $\xi \in \mathcal{H}$. Hence using $\tilde{\pi}$ we can view $\mathcal{A}$ as an algebra of operators on $\mathcal{H}$. Notice that the unit $a(x,y,p)=\delta_0(p)$ is contained in $\mathcal{A}$.

In the sequel we denote by $H^1(S)$ on an open set $S$ of $\mathbb{R}^2$ the Sobolev space of all elements in $L^2(S)$ possessing one weak derivative that lies in $L^2(S)$. If $S$ is a Lipschitz domain, then one can define boundary values of functions in $H^1(S)$ by using the trace theorem.

\begin{thm}[Spectral triple]\label{thm:spectral tripple}
Let $\mathcal{A}$ be as above and denote by $\mathcal{H}=L^2([0,1]\times \TT\times\ZZ)$.  Define an operator $D$ on $\mathcal{H}\otimes {\mathbb{C}}^2$ given by
\[
D(\Phi\otimes u)=\sum_{j=X,Y,Z}\tilde{\delta}_j(\Phi)\otimes \sigma_j(u),
\]
where $\sigma_j$'s are the Pauli spin matrices. The domain of the operator $D$ consists of all those square integrable functions $\Phi$ defined on $\mathcal{H}$ satisfying the conditions 
\begin{enumerate}[i)]
    \item $\Phi(\cdot,\cdot,p)\in H^1([0,1]\times \mathbb{T})$ for all $p\in \ZZ$,
    \item $\Phi(0,y,p)=\overline{e}(cp(y-p\nu)) \Phi(1, y, p)$ (interpreted by the use of the trace theorem),
    \item $p\Phi$ and $\partial_y(e(\lfloor cp^2\mu\rfloor y)\Phi)$ are in $\mathcal{H}$.
\end{enumerate}
For any $a\in \mathcal{A}$ we can define the operator $\tilde{\pi}(a)$ on $\mathcal{H}\otimes \CC^2$ by 
\[\tilde{\pi}(a)(\xi\otimes u)=(\tilde{\pi}(a)\xi)\otimes u,\quad\text{for }\xi\otimes u\in \mathcal{H}\otimes \CC^2.\]
Then $(\mathcal{A}, \mathcal{H}\otimes {\mathbb{C}}^2, D)$ is an odd spectral triple of dimension 3 of the QHMs.
\end{thm}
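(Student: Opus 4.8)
The plan is to reduce every axiom to the spectral analysis of $D^2$, which simplifies dramatically thanks to the Clifford relations of the Pauli matrices together with the commutation relations of the derivations. Since $\sigma_j\sigma_k+\sigma_k\sigma_j=2\delta_{jk}$ and, by the same computation that yields $[X,Y]=cZ$, one has $[\delta_X,\delta_Y]=c\,\delta_Z$ with $Z$ central so that $[\delta_X,\delta_Z]=[\delta_Y,\delta_Z]=0$, expanding $D^2=\sum_{j,k}\delta_j\delta_k\otimes\sigma_j\sigma_k$ collapses to
\[
D^2=(\delta_X^2+\delta_Y^2+\delta_Z^2)\otimes I + c\,\delta_Z\otimes\sigma_X\sigma_Y .
\]
Using $\sigma_X\sigma_Y=i\sigma_Z$ and $\delta_Z=2\pi i p$, the cross term becomes the sector-wise bounded operator $-2\pi cp\otimes\sigma_Z$. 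Each $\delta_j$ is formally skew-adjoint on $\mathcal{H}$ (multiplication by an imaginary function, or $\pm\partial$ with the twisted boundary condition built into the domain), so up to the usual factor of $i$ one works with $|D|=(-D^2)^{1/2}$, and all three spectral-triple conditions are statements about $-D^2$.

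First I would establish self-adjointness and compact resolvent. Because $\delta_Z$ acts in the $p$-th sector as the scalar $2\pi i p$, the Hilbert space decomposes as $\mathcal{H}\otimes\CC^2=\bigoplus_{p\in\ZZ}L^2([0,1]\times\TT)\otimes\CC^2$ and $-D^2$ is block diagonal. In the sector $p\neq 0$ the operator $-(\delta_X^2+\delta_Y^2)$ is exactly the magnetic Laplacian $-\partial_x^2+(i\partial_y+2\pi cp(x-p\mu))^2$ on the flat torus $[0,1]\times\TT$ carrying a line bundle of degree $cp$: the twisted boundary condition (ii) is the line-bundle gluing, and $cp\in\ZZ$ makes the flux integral. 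Its spectrum is the Landau ladder $2\pi|cp|(2k+1)$, $k\ge 0$, each level of multiplicity $|cp|$, with eigenfunctions built from Hermite functions in $x$ times theta-type factors in $y$; conditions (i) and (iii) (the gauge factor $e(\lfloor cp^2\mu\rfloor y)$ making $\delta_X\Phi\in L^2$ compatible with $y$-periodicity, and $p\Phi\in\mathcal{H}$ controlling $\delta_Z$) are precisely what pins down the domain sector by sector. Tensoring with the scalar $\delta_Z^2$ and the spin term, the eigenvalues of $-D^2$ are
\[
4\pi^2p^2+2\pi|cp|(2k+1)\pm 2\pi cp,\qquad k\ge 0,\ p\neq 0,
\]
of multiplicity $|cp|$, while the sector $p=0$ gives the flat Laplacian $4\pi^2(m^2+n^2)$ on the torus, which carries $\ker D$. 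Since $D$ commutes with $D^2$ it acts on each finite-dimensional $D^2$-eigenspace, so these yield a complete orthonormal eigenbasis for $D$; hence $D$ is essentially self-adjoint on the stated domain and, the eigenvalues tending to $\infty$ with finite multiplicity, $(D-\lambda)^{-1}$ is compact for $\lambda\notin\RR$.

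Next I would read off the dimension from a Weyl count. Writing $N(\Lambda)=\#\{\text{eigenvalues of }|D|\le\Lambda\}$, the $p=0$ sector contributes only $O(\Lambda^2)$, while for $p\neq 0$, summing the Landau multiplicities,
\[
N(\Lambda)\ \approx\ \sum_{1\le|p|\le\Lambda/2\pi}|cp|\cdot\#\Big\{k\ge0:\ 4\pi^2p^2+2\pi|cp|(2k+1)\le\Lambda^2\Big\}\ \approx\ \sum_{|p|\le\Lambda/2\pi}\frac{\Lambda^2-4\pi^2p^2}{4\pi}\ \asymp\ \Lambda^3 .
\]
Thus the $m$-th eigenvalue of $|D|$ grows like $m^{1/3}$, so $\mu_m(|D|^{-3})\le Cm^{-1}$, giving $|D|^{-3}\in\mathcal{L}^{(1,\infty)}$ and dimension $3$. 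The interplay producing exponent $3$ rather than $2$ is exactly the Landau degeneracy $|cp|$ together with the $\sim\Lambda/|p|$ accessible Landau levels and the $\sim\Lambda$ admissible values of $p$.

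Finally, the bounded-commutator condition is the soft part: for $a\in\mathcal{A}$ one has $[D,\tilde\pi(a)]=\sum_j[\delta_j,\tilde\pi(a)]\otimes\sigma_j$, and since each $\delta_j$ is the infinitesimal generator of $L$ compatibly with $\tilde\pi$, a direct computation (as for $\delta_Z$, where $[\delta_Z,\tilde\pi(a)]=\tilde\pi(\delta_Z a)$) gives $[\delta_j,\tilde\pi(a)]=\tilde\pi(\delta_j a)$; as $\delta_j a$ is again a smooth element of $\DD$ it acts boundedly, so $[D,\tilde\pi(a)]$ is bounded. I expect the real difficulty to lie in the self-adjointness/domain step: one must show that conditions (i)--(iii) single out exactly the maximal symmetric domain sector by sector — in particular that the floor-function gauge in (iii) is the correct compatibility between the $x$-twist and the $y$-periodicity for every $p$ — and that the magnetic eigenfunctions genuinely exhaust $\mathcal{H}\otimes\CC^2$. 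Obtaining uniform control over the growing flux $cp$ as $|p|\to\infty$ is what makes both the resolvent compactness and the Weyl asymptotics delicate.
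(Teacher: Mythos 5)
Your proposal is correct, and it takes a genuinely different route from the paper's proof. The paper never computes the spectrum of $D$: after conjugating by $M_1\phi=e(cp(y-p\nu)x)\phi$ to convert the twisted boundary condition into periodicity, it treats $D'$ as the family $\{D_p\}_{p\in\ZZ}$, proves each $D_p$ self-adjoint on $H^1(\TT^2)\otimes\CC^2$ by splitting off the bounded multiplication part $S_p$ and checking surjectivity of $T_p\pm i$ through Fourier coefficients (a bounded-perturbation argument), and then controls everything else through the lower bound \eqref{eq:lower_bound_D}, obtained from the same Pauli cross-term cancellation you use plus Young's inequality; the dimension statement follows by feeding that bound into the min-max principle and Weyl's law for the Laplacian on $\TT^3$, giving $\mu_k(D^2)\gtrsim k^{2/3}$. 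You instead diagonalize: the Lichnerowicz-type identity $D^2=(\delta_X^2+\delta_Y^2+\delta_Z^2)\otimes \Id - 2\pi cp\otimes\sigma_Z$ (your cross term is exactly what the paper's completing-the-square encodes as the $\|(2\pi p\mp c/2)\phi\|^2$ terms), the identification of each sector $p\neq 0$ with a Landau Hamiltonian on a degree-$cp$ line bundle over $\TT^2$, and the resulting explicit spectrum $4\pi^2p^2+2\pi|cp|(2k+1)\pm 2\pi cp$ with multiplicity $|cp|$. This buys strictly more than the paper establishes: exact eigenvalues, compact resolvent for free, and a two-sided Weyl count $N(\Lambda)\asymp\Lambda^3$, whereas the paper only proves the upper bound on the counting function needed for $|D|^{-3}\in\mathcal{L}^{(1,\infty)}$; the price is importing the classical (but nontrivial) theory of magnetic Laplacians on line bundles over flat tori, with its theta-function lowest Landau levels. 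The commutator step is identical in both proofs. The one point where you are thinner than the paper is the one you flag yourself: a complete eigenbasis inside the stated domain gives essential self-adjointness there, while the theorem names conditions i)--iii) as the actual domain of self-adjointness. The paper closes this by showing that finiteness of $\sum_p\|D_p\phi(\cdot,\cdot,p)\otimes v\|^2$ --- membership in the maximal domain of $\bigoplus_p D_p$, which is automatically self-adjoint --- is equivalent to those conditions; in your framework the analogous step is to check that i)--iii) coincide with $\{\sum_n c_ne_n:\sum_n|\lambda_nc_n|^2<\infty\}$ in terms of your eigenbasis, which is routine given the explicit spectral decomposition but does need to be said.
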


The argument will be split up into a sequence of lemmatas and propositions, which will be the content of the remainder of this section. Particular focus is put on identifying a suitable domain for the Dirac operator $D$ on which it is a self-adjoint operator.

{\bf Step 1: Identifying a domain for $D$}\\
First we transform the boundary conditions defined by using $\rho$ to periodic boundary conditions. Set $M_1:L^2(\mathbb{T}^2\times \mathbb{Z})\to L^2([0,1]\times \mathbb{T}\times \mathbb{Z})$ to be the operator
\[M_1\phi(x,y,p)=e(cp(y-p\nu)x)\phi(x,y,p).\]
The derivations $\tilde{\delta}_X,\,\tilde{\delta}_Y$, and $\tilde{\delta}_Z$ after conjugation by $M_1$ become
\begin{align*}
    &\delta_X'=M_1^{-1}\delta_XM_1=-2\pi icp^2\mu -\frac{d}{dy},\\
    &\delta_Y'=M_1^{-1}\delta_YM_1=-2\pi icp(y-p\nu) -\frac{d}{dx},\\
    &\delta_Z'=M_1^{-1}\delta_ZM_1=2\pi ip\,.
\end{align*}
Hence we need to find a domain where the operator  \[D'=(M_1\otimes\Id_{\CC^2})^{-1}D(M_1\otimes\Id_{\CC^2}),\]
is self-adjoint.
Using $M_1$ to transform the suggested domain of $D$, we get that the domain of $D'$ should satisfy
\begin{enumerate}[i)]
    \item $\Phi(\cdot,\cdot,p)\in H^1( \mathbb{T}^2)$ for all $p\in \ZZ$,
    \item $\Phi(0,y,p)= \Phi(1, y, p)$,
    \item $p\Phi$ and $\partial_y(e(cp(y-p\nu)x+\lfloor cp^2\mu\rfloor y)\Phi)$ and $\partial_x(e(cp(y-p\nu)x)\Phi)$ are in $\mathcal{H}$.
\end{enumerate}

Note that the operator $D'$ can be viewed as a family of operator $D'=\{D_p\}_{p\in \mathbb{Z}}$, where for $p\in \mathbb{Z}$ the operator $D_p$ is defined on $L^2(\mathbb{T}^2)\otimes\mathbb{C}^2$ by 
\begin{align*}
D_p \Phi(x,y)\otimes v &=-i\frac{\partial\Phi(x,y)}{\partial y}\otimes \sigma_X(v)-i\frac{\partial\Phi(x,y)}{\partial x}\otimes \sigma_Y(v)+M_{2\pi c p^2\mu}\Phi(x,y)\otimes \sigma_X(v)\\
&\quad+M_{2\pi pc(y-p\nu)}\Phi(x,y)\otimes \sigma_Y(v)- M_{2\pi p}\Phi(x,y)\otimes \sigma_Z(v).
\end{align*}
Consequently, the question of the domain for $D^\prime$ is reduced to one for $D_p$.
\begin{lem}
The domain of the operator $D_p$ is given by $\mathrm{Dom}(D_p)=H^1(\mathbb{T}^2)\otimes \mathbb{C}^2$.
\end{lem}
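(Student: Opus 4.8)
The plan is to peel off the first-order part of $D_p$ and treat everything else as a bounded perturbation that cannot affect the domain. Write $D_p=D_p^0+B_p$, where
\[
D_p^0(\Phi\otimes v)=-i\,\partial_y\Phi\otimes\sigma_X(v)-i\,\partial_x\Phi\otimes\sigma_Y(v)
\]
is the constant-coefficient ``free'' part and $B_p$ collects the three multiplication terms $M_{2\pi cp^2\mu}\otimes\sigma_X$, $M_{2\pi pc(y-p\nu)}\otimes\sigma_Y$ and $-M_{2\pi p}\otimes\sigma_Z$. For fixed $p$ each of these is multiplication by a bounded function on $\mathbb{T}^2$ (the first and third by constants, the second by the bounded function $y\mapsto 2\pi pc(y-p\nu)$ on $[0,1)$), and since the Pauli matrices are Hermitian and the coefficients are real, $B_p$ is a bounded self-adjoint operator on $L^2(\mathbb{T}^2)\otimes\mathbb{C}^2$. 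A bounded operator maps $L^2$ into $L^2$, so adding $B_p$ does not change the maximal domain $\{\Phi\in L^2:D_p\Phi\in L^2\}$; hence $\mathrm{Dom}(D_p)=\mathrm{Dom}(D_p^0)$, and it suffices to show $\mathrm{Dom}(D_p^0)=H^1(\mathbb{T}^2)\otimes\mathbb{C}^2$.

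For this I would pass to the Fourier basis $e_{m,n}(x,y)=e^{2\pi i(mx+ny)}$, $(m,n)\in\mathbb{Z}^2$, which diagonalizes the constant-coefficient operator $D_p^0$ block by block. On the mode $e_{m,n}$ one has $-i\partial_x\mapsto 2\pi m$ and $-i\partial_y\mapsto 2\pi n$, so $D_p^0$ acts as the $2\times2$ Hermitian matrix $\widehat{D}^{0}(m,n)=2\pi(n\sigma_X+m\sigma_Y)$. By the Clifford relations $\sigma_X^2=\sigma_Y^2=I$ and $\sigma_X\sigma_Y+\sigma_Y\sigma_X=0$ this matrix squares to $4\pi^2(m^2+n^2)I$, hence has operator norm $2\pi\sqrt{m^2+n^2}$ and eigenvalues $\pm2\pi\sqrt{m^2+n^2}$. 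In particular $D_p^0$ is unitarily equivalent, via the Fourier transform, to multiplication by the family of Hermitian matrices $\{\widehat{D}^{0}(m,n)\}$, which is manifestly self-adjoint on its maximal multiplication domain.

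It then remains to identify that domain. A function $\Phi=\sum_{m,n}\hat\Phi(m,n)e_{m,n}$ lies in $\mathrm{Dom}(D_p^0)$ exactly when $D_p^0\Phi\in L^2$, i.e.\ when $\sum_{m,n}\|\widehat{D}^{0}(m,n)\hat\Phi(m,n)\|^2<\infty$; using the computed norm this reads $\sum_{m,n}4\pi^2(m^2+n^2)\|\hat\Phi(m,n)\|^2<\infty$. Together with the $L^2$-condition $\sum_{m,n}\|\hat\Phi(m,n)\|^2<\infty$ this is precisely the Fourier characterization of the Sobolev space, namely $\sum_{m,n}(1+m^2+n^2)\|\hat\Phi(m,n)\|^2<\infty$, which defines $H^1(\mathbb{T}^2)\otimes\mathbb{C}^2$. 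This gives $\mathrm{Dom}(D_p^0)=H^1(\mathbb{T}^2)\otimes\mathbb{C}^2$, and hence the same for $D_p$.

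The step I expect to require the most care is the claim that the maximal (distributional) domain of $D_p^0$ coincides with $H^1$ rather than being strictly larger --- this is really an elliptic-regularity statement. In the Fourier picture it becomes transparent because the symbol $\widehat{D}^{0}(m,n)$ is elliptic of order one with no degeneracy: its norm grows like $\sqrt{m^2+n^2}$ with no lower-dimensional locus where it collapses, so $\|D_p^0\Phi\|_{L^2}^2$ controls the full homogeneous $H^1$ seminorm and there is neither gain nor loss of a derivative. On the torus there are no boundary terms to handle, so the periodicity built into the Fourier basis already encodes the (trivial) boundary conditions, and no separate trace-theorem argument is needed at this stage.
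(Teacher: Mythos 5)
Your proof is correct and follows essentially the same route as the paper: the identical splitting of $D_p$ into the constant-coefficient Dirac part plus a bounded self-adjoint multiplication part (the paper's $T_p+S_p$), a bounded-perturbation argument to reduce to the free part, and Fourier analysis on $\mathbb{T}^2$ to identify its domain as $H^1(\mathbb{T}^2)\otimes\mathbb{C}^2$. The only cosmetic difference is the self-adjointness certificate for the free part: the paper checks surjectivity of $T_p\pm i\,\mathrm{Id}$ by explicitly solving for the Fourier coefficients (the denominators $4\pi^2(m^2+n^2)+1$ appearing there are exactly your $\|\widehat{D}^{0}(m,n)\|^2+1$), whereas you diagonalize the symbol $2\pi(n\sigma_X+m\sigma_Y)$ and invoke self-adjointness of Hermitian matrix multiplication operators on their maximal domain --- the same underlying computation.
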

\begin{proof}
Denote by $S_p$ the bounded self-adjoint operator
\begin{equation*}
S_p \Phi(x,y)\otimes v =M_{2\pi c p^2\mu}\Phi(x,y)\otimes \sigma_X(v)+M_{2\pi pc(y-p\nu)}\Phi(x,y)\otimes \sigma_Y(v)- M_{2\pi p}\Phi(x,y)\otimes \sigma_Z(v),
\end{equation*}
and let $T_p$ be the unbounded symmetric operator
   \[T_p(\phi\otimes v)=-i\frac{\partial \phi}{\partial y}\otimes \sigma_X(v)-i\frac{\partial \phi}{\partial x}\otimes \sigma_Y(v)=\begin{pmatrix}-v_2\frac{d\phi}{dz}\\ v_1\frac{d\phi}{d\bar{z}}\end{pmatrix}.\]
Then since $D_p=T_p+S_p$, we have that if $T_p$ is a self-adjoint operator then $D_p$ are a self-adjoint operators on the same domain, after an application of \cite[Lemma 3.27]{Bo20}. In other words, if we can show that $(T_p, \mathrm{Dom}(T_p))$ is self-adjoint then $D_p$ is self-adjoint on $\mathrm{Dom}(T_p)=\mathrm{Dom}(D_p)$.

   {\bf Claim:} The operator $T_p$ with domain $\mathrm{Dom}(T_p)=H^1(\mathbb{T}^2)\otimes \mathbb{C}^2$ is self-adjoint.
\\
Recall that a symmetric operator $(V, \mathrm{Dom}(V))$ is self-adjoint if and only if the operators $(V+i\,\mathrm{Id}_{L^2(\TT^2)\otimes\CC^2}, \mathrm{Dom}(V))$ and $(V-i\,\mathrm{Id}_{L^2(\TT^2)\otimes\CC^2},\mathrm{Dom}(V))$ are surjective, see \cite[Thm.~3.29]{Bo20}.

Let us write a function $\phi\in H^1(\mathbb{T}^2)\otimes \mathbb{C}^2$ with respect to the Fourier basis:
\[\phi(x,y)\otimes v=\sum_{m,n\in \mathbb{Z}}a_{1,m,n}e^{2\pi i(xm+yn)}e_1+a_{2,m,n}e^{2\pi i(xm+yn)}e_2,\]
where $e_1$ and $e_2$ are the standard basis vectors for $\mathbb{C}^2$.
Then the operator $T_p+i\,\mathrm{Id}_{L^2(\TT^2)\otimes\CC^2}$ acts on $\phi$ by 
\begin{align*}
   (T_p+i\,\mathrm{Id}_{L^2(\TT^2)\otimes\CC^2})\phi(x,y)\otimes v&=\sum_{m,n\in \mathbb{Z}}(2\pi (n-im)a_{2,m,n}+ia_{1,m,n})e^{2\pi i(xm+yn)}e_1\\
   &\quad +\sum_{m,n\in \mathbb{Z}}(2\pi (n+im)a_{1,m,n}+ia_{2,m,n})e^{2\pi i(xm+yn)}e_2
\end{align*}
Since an arbitrary element in $\xi\otimes w\in L^2(\mathbb{T}^2)\otimes\mathbb{C}^2$ can be written as
   \[\xi\otimes w=\sum_{m,n\in \mathbb{Z}}b_{1,m,n}e^{2\pi i(xm+yn)}e_1+b_{2,m,n}e^{2\pi i(xm+yn)}e_2,\]
   for $T_p+i\,\mathrm{Id}_{L^2(\TT^2)\otimes\CC^2}$ to be surjective we need to find $a_{1,n,m}$ and $a_{2,n,m}$ satisfying 
\begin{multline*}
   \sum_{m,n\in \mathbb{Z}}(2\pi (n-im)a_{2,m,n}+ia_{1,m,n})e^{2\pi i(xm+yn)}e_1+(2\pi (n+im)a_{1,m,n}+ia_{2,m,n})e^{2\pi i(xm+yn)}e_2\\
   =\sum_{m,n\in \mathbb{Z}}b_{1,m,n}e^{2\pi i(xm+yn)}e_1+b_{2,m,n}e^{2\pi i(xm+yn)}e_2.
\end{multline*}
Solving for $a_{1,m,n}$ and $a_{2,m,n}$ one can see that
\[a_{1,n,m}=\frac{-ib_{1,n,m}+2\pi (n-mi)b_{2,n,m}}{4\pi^2(m^2+n^2)+1}\]
and
\[a_{2,n,m}=\frac{-ib_{2,n,m}+2\pi (n+im)b_{1,n,m}}{4\pi^2(m^2+n^2)+1}.\]
Now, the Sobolev norm of $\phi\otimes v$ is bounded by
\begin{align*}
\|\phi\otimes v\|_{H^1(\mathbb{T}^2)\otimes\mathbb{C}^2}&=|b_{1,0,0}|^2+|b_{2,0,0}|^2\\
&\quad+\sum_{m,n\in \mathbb{Z}}(m^2+n^2)\left|\frac{-ib_{1,n,m}+2\pi (n-mi)b_{2,n,m}}{4\pi^2(m^2+n^2)+1}\right|^2\\
&\quad+\sum_{m,n\in \mathbb{Z}}(m^2+n^2)\left|\frac{-ib_{2,n,m}+2\pi (n+mi)b_{1,n,m}}{4\pi^2(m^2+n^2)+1}\right|^2\\
&\le C\|\xi\otimes w\|_{L^2(\mathbb{T}^2)\otimes\mathbb{C}^2}<\infty.
\end{align*}
This yields the desired claim that $T_p+i\,\mathrm{Id}_{L^2(\TT^2)\otimes\CC^2}:\mathrm{Dom}(D_p)\to L^2(\mathbb{T}^2)\otimes\mathbb{C}^2$ is surjective.

The argument for the surjectivity of $T_p-i\,\mathrm{Id}_{L^2(\TT^2)\otimes\CC^2}$ goes along the same lines. The desired assertion follows by the boundedness of $S_p$.
\end{proof}
Let us now identify a domain where the operator $D'$ is self-adjoint. 
A necessary condition for the function  $\phi\otimes w\in \mathcal{H}\otimes \CC^2$ to be in the domain of $D'$ is that \[\sum_{p\in \ZZ}\|D_p\phi(\cdot,\cdot,p)\otimes w\|_{L^2(\TT^2)\otimes \CC^2}^2<\infty\]
otherwise $D'\phi\otimes w\not\in L^2(\mathbb{T}^2\times \mathbb{Z})\otimes \CC^2$. Let us see what this amounts to more concretely.

For $v=v_1e_1+v_2e_2\in \CC^2$ we have the identities  
\begin{align*}
\langle\sigma_X(v),\sigma_Y(v)\rangle_{\CC^2}&=i(|v_2|^2-|v_1|^2)=-\langle\sigma_Y(v),\sigma_X(v)\rangle_{\CC^2}, \\
\langle\sigma_X(v),\sigma_Z(v)\rangle_{\CC^2}&=2i\mathrm{Im}(v_2\bar{v_1})=-\langle\sigma_Z(v),\sigma_X(v)\rangle_{\CC^2}, \\ 
\langle\sigma_Y(v),\sigma_Z(v)\rangle_{\CC^2}&=-2i\mathrm{Re}(v_2\bar{v_1})=-\langle\sigma_Z(v),\sigma_Y(v)\rangle_{\CC^2}.
\end{align*}
Writing our the norm gives
\begin{align*}
\sum_{p\in \ZZ}\|D_p\phi(\cdot,\cdot,p)\otimes v\|_{L^2(\TT^2)\otimes \CC^2}^2
&=\|\delta_X'\phi\|_{L^2(\TT^2\times \ZZ)}^2\|v\|_{\CC^2}^2+\|\delta_Y'\phi\|_{L^2(\TT^2\times \ZZ)}^2\|v\|_{\CC^2}^2+\|\delta_Z'\phi\|_{L^2(\TT^2\times \ZZ)}^2\|v\|_{\CC^2}^2\\
&\quad+\left(\langle i\delta_X'\phi,i\delta_Y'\phi\rangle_{L^2(\TT^2\times \ZZ)}-\langle i\delta_Y'\phi,i\delta_X'\phi\rangle_{L^2(\TT^2\times \ZZ)}\right)\langle\sigma_X(v),\sigma_Y(v)\rangle_{\CC^2}\\
&\quad+\left(\langle i\delta_Y'\phi,i\delta_Z'\phi\rangle_{L^2(\TT^2\times \ZZ)}-\langle i\delta_Z'\phi,i\delta_Y'\phi\rangle_{L^2(\TT^2\times \ZZ)}\right)\langle\sigma_Y(v),\sigma_Z(v)\rangle_{\CC^2}\\
&\quad+\left(\langle i\delta_X'\phi,i\delta_Z'\phi\rangle_{L^2(\TT^2\times \ZZ)}-\langle i\delta_X'\phi,i\delta_Y'\phi\rangle_{L^2(\TT^2\times \ZZ)}\right)\langle\sigma_X(v),\sigma_Z(v)\rangle_{\CC^2}.
\end{align*}
Using that $i\delta_X, i\delta_Y,$ and $i\delta_Z$ are symmetric, we get that 
\begin{equation*}
    \langle i\delta_Y'\phi,i\delta_Z'\phi\rangle_{L^2(\TT^2)}-\langle i\delta_Z'\phi,i\delta_Y'\phi\rangle_{L^2(\TT^2)}=\langle i\delta_X'\phi,i\delta_Z'\phi\rangle_{L^2(\TT^2)}-\langle i\delta_X'\phi,i\delta_Y'\phi\rangle_{L^2(\TT^2)}=0.
\end{equation*}
For the last cross term, we have that if $\phi$ is twice differentiable, then
\begin{equation*}
    \langle i\delta_X'\phi,i\delta_Y'\phi\rangle_{L^2(\TT^2)}-\langle i\delta_Y'\phi,i\delta_X'\phi\rangle_{L^2(\TT^2)}=\langle (-\delta_X'\delta_Y'+\delta_X'\delta_Y')\phi,\phi\rangle_{L^2(\TT^2)}=-\langle c\delta_Z'\phi,\phi\rangle_{L^2(\TT^2)}.
\end{equation*}
By using that twice differentiable functions are dense in $H^1(\TT^2)$ we get that the same identity holds for all $\phi$.
Finally, completing squares gives 
\begin{align*}
 \sum_{p\in \ZZ}\|D_p\phi(\cdot,\cdot,p)\otimes v\|_{L^2(\TT^2)\otimes \CC^2}^2
&=\|\delta_X'\phi\|_{L^2(\TT^2\times \ZZ)}^2\|v\|_{\CC^2}^2+\|\delta_Y'\phi\|_{L^2(\TT^2\times \ZZ)}^2\|v\|_{\CC^2}^2\\
&\quad+\|(2\pi p-c/2)\phi\|_{L^2(\TT^2\times \ZZ)}^2|v_1|^2+\|(2\pi p+c/2)\phi\|_{L^2(\TT^2\times \ZZ)}^2|v_2|^2\\
&\quad-\frac{c^2}{4}\|\phi\|^2_{L^2(\TT^2\times \ZZ)}\|v\|_{\CC^2}^2.
\end{align*}
Thus, this expression is finite if $\phi$ is an element of the suggested domain.

We are only left to show that the above expression can not be finite for any $\phi$ outside the suggested domain.
Denote by $M_2$ the operator given by $M_2\phi=e(\lfloor cp^2\nu\rfloor x-\lfloor cp^2\mu\rfloor y)\phi$.
By an application of  Young's inequality and the reverse triangle inequality
\begin{equation}\label{eq:Young}
\|u+w\|^2\ge (1-\epsilon)\|u\|+\left(1-\frac{1}{\epsilon}\right)\|v\|^2
\end{equation}
we deduce a lower bound for the operator norm of  
\begin{align*}
    \|(M_2\otimes \Id)^{-1}D'(M_2\otimes \Id)\phi\otimes v\|_{L^2(\TT^2\times \ZZ)\otimes \CC^2}^2&\ge \|v\|_{\CC^2}^2\Big(\big\|2\pi(cp^2\mu-\lfloor cp^2\mu\rfloor)\phi-i\frac{d\phi}{dy}\big\|_{L^2(\TT^2\times \ZZ)}^2\\
    &\quad+\big\|2\pi( cpy+cp^2\nu-\lfloor cp^2\nu\rfloor)\phi-i\frac{d\phi}{dx}\big\|_{L^2(\TT^2\times \ZZ)}^2\\
    &\quad-2c^2\|\phi\|_{L^2(\TT^2\times \ZZ)}^2\Big)+\|(2\pi p-c)\phi\|_{L^2(\TT^2\times \ZZ)}^2|v_1|^2\\
    &\quad+\|(2\pi p+c)\phi\|_{L^2(\TT^2\times \ZZ)}^2|v_2|^2.
\end{align*}
By Young's inequality \eqref{eq:Young} we have that 
\[\big\|2\pi(cp^2\mu-\lfloor cp^2\mu\rfloor)\phi-i\frac{d\phi}{dy}\big\|_{L^2(\TT^2\times \ZZ)}^2\ge \frac{1}{8}\big\|\frac{d}{dy}\phi\big\|_{L^2(\TT^2\times \ZZ)}^2-\frac{4\pi^2}{7}\|\phi\|_{L^2(\TT^2\times \ZZ)}^2,\]
\begin{multline*}
\big\|2\pi cpy\phi+2\pi(cp^2\nu-\lfloor cp^2\nu\rfloor)\phi-i\frac{d\phi}{dy}\big\|_{L^2(\TT^2\times \ZZ)}^2\ge \frac{1}{8(c^2+1)}\big\|\frac{d\phi}{dx}\big\|_{L^2(\TT^2\times \ZZ)}^2-\frac{4\pi^2}{7}\|\phi\|_{L^2(\TT^2\times \ZZ)}^2\\
-\frac{1}{8}\|2\pi p\phi\|_{L^2(\TT^2\times \ZZ)}^2
\end{multline*}
and 
\[\|(2\pi p\pm c/2)\phi\|_{L^2(\TT^2\times \ZZ)}^2\ge \frac{3}{4}\|2\pi p\phi\|_{L^2(\TT^2\times \ZZ)}^2-\frac{3c^2}{4}\|\phi\|_{L^2(\TT^2\times \ZZ)}^2.\]
Collecting all these inequalities demonstrates that 
\begin{multline} \label{eq:lower_bound_D}
    \|(M_2\otimes \Id)^{-1}D'M_2\otimes \Id\phi\otimes v\|_{L^2(\TT^2\times \ZZ)\otimes\CC^2}^2\ge \|v\|_{\CC^2}^2\bigg(\frac{1}{8(c^2+1)}\Big(\big\|\frac{d\phi}{dy}\big\|_{L^2(\TT^2\times \ZZ)}^2\\+\big\|\frac{d\phi}{dx}\big\|_{L^2(\TT^2\times \ZZ)}^2 +\|2\pi p\phi\|_{L^2(\TT^2\times \ZZ)}^2\Big)-\Big(\frac{8\pi^2}{7}+c^2\Big)\|\phi\|_{L^2(\TT^2\times \ZZ)}^2\bigg).
\end{multline}
In other words, a function $\Phi$ is in $\mathrm{Dom}(D')$ if it satisfies: 
\begin{enumerate}[(i)]
    \item $\Phi(\cdot,\cdot,p)\in H^1(\mathbb{T}^2)$,
    \item $p\Phi$, $\partial_y(e(\lfloor cp^2\mu\rfloor y)\Phi)$ and $\partial_x(e(\lfloor cp^2\nu\rfloor x)\Phi)$ are in $L^2(\mathbb{T}^2\times \mathbb{Z})$.
\end{enumerate}
These conditions are equivalent to the ones given for $D'$ above.

Note that on $\operatorname{Dom}(D)$, $D=T+S$, where $\operatorname{Dom}(T)=\operatorname{Dom}(D) \subseteq \operatorname{Dom}(S)$ and $T, S$ are given by
\[
T=-i\frac{\partial}{\partial y} \otimes \sigma_X(u)-i\frac{\partial}{\partial x} \otimes \sigma_Y(u)-2\pi M_p\otimes \sigma_Z(u),
\]
\[
S=-2\pi M_{cp(x-p\mu)}\otimes \sigma_X(u).
\]
where $M$ is a multiplication operator. The arguments given in the proof of Proposition~9 of \cite{ChS} still holds in our setting. The operators $T$ and $S$ are self-adjoint on their respective domains, and $T$ has compact resolvent. Since $S$ is relatively bounded with respect to $T$, Rellich's lemma implies that $S$ has a compact resolvent, too.
\\~~\\
{\bf Step 2: Boudedness of $[D,a]$}

Take an $a\in \DD$. We want to establish that 
\[[\tilde{\pi}(a),D]=i[\tilde{\pi}(a), \delta_X]\otimes \sigma_X+i[\tilde{\pi}(a), \delta_Y]\otimes \sigma_Y+i[\tilde{\pi}(a), \delta_Z]\otimes \sigma_Z\]
is bounded.
Let us first compute the commutators of $\tilde{\pi}(a)$ with $\delta_X,\delta_Y$ and $\delta_Z$:
\[ [\tilde{\pi}(a), \delta_Y]=-\tilde{\pi}(\delta_Y a),\quad[\tilde{\pi}(a), \delta_Z]=-\tilde{\pi}(\delta_Z a),\quad\text{and}\quad  [\tilde{\pi}(a), \delta_X]=-\tilde{\pi}(\delta_Xa).\] 
Hence we have that 
\begin{equation}\label{eq:commutator}
[\tilde{\pi}(a),D]=-i\tilde{\pi}\left(\delta_X a\right)\otimes \sigma_X-i\tilde{\pi}(\delta_Y a)\otimes \sigma_Y-i\tilde{\pi}(\delta_Z a)\otimes \sigma_Z=-\tilde{\pi}(Da).
\end{equation}
Since we know that $\tilde{\pi}:\mathcal{A}\to \mathcal{B}(L^2([0,1]\times \mathbb{T}\times \mathbb{Z}))$, the result follows from $D(a)\in \mathcal{A}$.

\vspace{0.5cm}

{\bf Step 3: The spectral triple $(\mathcal{A}, \mathcal{H}\otimes {\mathbb{C}}^2, D)$ has dimension $3$.}

Before we discuss the behavior of the eigenvalues of $D^2$, we recall that the eigenvalues of the Laplacian on $\mathbb{T}^3$ can be written as 
\[\lambda_k=\max_{\substack{V\subset H^2(\mathbb{T}^3)\\ \mathrm{dim}(V)=k}}\min_{u\in V\setminus \{0\}}\frac{\|\nabla u\|^2}{\|u\|^2},\]
where $\nabla$ denotes the gradient. By Weyl's law formula  we deduce that 
\[\lim_{k\to \infty}\frac{\lambda_k}{k^{2/3}}=C.\]

This control of the eigenvalue behavior of the Laplacian suffices to find the behavior of eigenvalues of $|D|^{-3}$.

%Since $D$ is self-adjoint we have that $D^2$ is a positive operator. 
Denote by $\mathcal{F}:L^2(\mathbb{T}^2\otimes \mathbb{Z})\to L^2(\mathbb{T}^3)$
the inverse Fourier transform in the $p$ variable, which takes $M_{2\pi i p}$ to $\partial_z$. The lower bound \eqref{eq:lower_bound_D} combined with the min-max theorem, see \cite[Thm.~5.15]{Bo20} yields that 
\begin{align*}
\mu_k&=\max_{\substack{V\subset \mathrm{Dom}(D^2)\\ \mathrm{dim}(V)=k}}\min_{\phi\otimes v\in V\setminus \{0\}}\frac{\langle D^2 \phi\otimes v, \phi\otimes v\rangle }{\|\phi\otimes v\|^2}\\
&=\max_{\substack{V\subset \mathrm{Dom}(D^2)\\ \mathrm{dim}(V)=k}}\min_{\phi\otimes v\in V\setminus \{0\}}\frac{\langle D \phi\otimes v, D\phi\otimes v\rangle }{\|\phi\|^2\| v\|^2}\\
&\ge\max_{\substack{V\subset H^1(\mathbb{T}^3)\otimes \mathbb{C}^2\\ \mathrm{dim}(V)=k}}\min_{\phi\otimes v\in V\setminus \{0\}}\frac{\frac{1}{8(c^2+1)}\big(\|\frac{d\phi}{dy}\|^2+\|\frac{d\phi}{dx}\|^2+\|\frac{d\phi}{dz}\|^2 \big)-(\frac{8\pi^2}{7}+c^2)\|\phi\|^2}{\|\phi\|^2}\\
&\ge\frac{1}{8(c^2+1)}\max_{\substack{V\subset H^1(\mathbb{T}^3)\otimes \mathbb{C}^2\\ \mathrm{dim}(V)=k}}\min_{\phi\otimes v\in V\setminus \{0\}}\frac{\|\nabla \phi\|^2}{\|\phi\|^2}-\Big(\frac{8\pi^2}{7}+c^2\Big),
\end{align*}
which proves that for large $k$, the eigenvalue $\mu_k\ge C(c,\mu) k^{2/3}$. We set $|D|^{-3}$ to be zero on $\ker(D)$. In particular, we have that $|D|^{-3}$ has eigenvalues that go to zero faster than $\frac{1}{k}$. Hence the sum of the first $k$-th largest eigenvalues grows slower than $\log(k)$, which finishes our argument.

\section{Sigma model on the QHMs}
In this section, we derive the sigma-model energy functional $S$ defined on the set of projections of $\DD$ for the spectral triples introduced in \cite{ChS} and \cite{ChG}. There is a natural way to obtain a sigma-model energy functional for spectral triples \cite[Section~4.1]{MaRos} for a given spectral triple $(\mathcal{A}, \mathcal{H},D)$. Then they define a positive Hochschild 2-cocyle $\psi_2$ on $\mathcal{A}$ by
\[
\psi_2(a_0,a_1, a_2)=\Tr\Big( (\Id_{\CC^2}+\gamma)a_0 [D,a_1] [D,a_2]\Big)
\]
where $\gamma=\sigma_Z$ is a grading operator on $\mathcal{H}$, and where $\Tr$ is the canonical trace of matrices. Since we do not work with an even spectral triple we will remove the grading operator in the definition, however the definitions end up being equivalent.

Recall that an element in $P\in \DD$ is said to be a \textit{projection} if we have that \[P=P^2=P^*.\]
Using this relation we have that the action of $P$ on $L^2([0,1]\times \TT\times \ZZ)$ is an orthogonal projection. With respect to the spectral triple $(\mathcal{A}, L^2([0,1]\times\TT\times \ZZ)\otimes \CC^2, D)$ define a \textit{Hochschild 2-chain} by \[\psi_2(a,a_1,a_2)=\Tr(a[D,a_1][D,a_2]),\]
where $\Tr$ denotes the matrix trace.
We say that $(a,a_1,a_2)$ is a \textit{Hochschild 2-cocyle} with respect to $D$ if the boundary map
\[b(\psi_2(a,a_1,a_2))=\Tr(aa_1[D,a_2]-a[D,a_1a_2]+a_2a[D,a_1])=0.\]
Notice that since the Pauli matrices have trace $0$ the boundary map is always zero.

Let $\mathcal{B}$ be a $C^*$-algebra and consider a set of homomorphisms $\psi$ from a smooth subalgebra of $\mathcal{B}$ with target $\mathcal{A}$. Then the general sigma-model energy functional $S$ associated to the spectral triple is given by
\[
S(\phi)=\phi^\ast(\psi),
\]
where $\phi^*(\psi)$ is a positive Hochschild 2-cocycle on $\mathcal{B}$. 

We are interested in the case where $\mathcal{B}$ is $\mathbb{C}$ and $\phi$ is $\tau_D$ and $\psi$ is $\psi_2$. Then the energy functional associated to $D$ and $\phi$ defined in \cite[Section 4]{MaRos} is given in the next proposition.
%{\color{blue} (Franz, could you put some more explanation or references for $\psi^\ast=\tau_D$?)}

\begin{prop} \label{prop:S}

For given the basis $\{X,Y,Z\}$ of the Heisenberg Lie algebra $\mathfrak{h}$ with $[X,Y]=cZ$,
the action functional on the set of projections of $\DD$ of the sigma model is given as follow:
\[\begin{split}
S(p)&:=-\tau_D(\psi_2(\Id, P,P))=2 \,\tau_D\big((\delta_X P)^2 + (\delta_Y P)^2+ (\delta_Z P)^2\big).
\end{split}\]
\end{prop}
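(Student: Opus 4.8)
The plan is to reduce the statement to the commutator identity already recorded in \eqref{eq:commutator} and then perform the elementary matrix-trace bookkeeping over the $\CC^2$ factor. Since here $a_0=\Id$ and $a_1=a_2=P$, the only input is $[D,\tilde{\pi}(P)]$, and \eqref{eq:commutator} gives (with the sign opposite to the displayed $[\tilde{\pi}(P),D]$)
\[
[D,\tilde{\pi}(P)]=i\,\tilde{\pi}(\delta_X P)\otimes\sigma_X+i\,\tilde{\pi}(\delta_Y P)\otimes\sigma_Y+i\,\tilde{\pi}(\delta_Z P)\otimes\sigma_Z .
\]
First I would suppress $\tilde{\pi}$ and square this operator, which gives
\[
[D,P]^2=-\sum_{j,k\in\{X,Y,Z\}}(\delta_j P)(\delta_k P)\otimes\sigma_j\sigma_k .
\]

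Next I would take the matrix trace $\Tr$ over $\CC^2$. The decisive point is that $\sigma_j^2=\Id_{\CC^2}$ has trace $2$, while for $j\neq k$ the product $\sigma_j\sigma_k$ equals $\pm i$ times the remaining Pauli matrix and is therefore traceless. Consequently every off-diagonal term is annihilated, so there is no need to symmetrize the operator products $(\delta_j P)(\delta_k P)$; only the diagonal $j=k$ terms survive, and
\[
\psi_2(\Id,P,P)=\Tr\big([D,P]^2\big)=-2\big((\delta_X P)^2+(\delta_Y P)^2+(\delta_Z P)^2\big),
\]
which is an element of $\DD$ (more precisely of the smooth subalgebra, where $\tau_D$ is defined).

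Finally I would apply $\tau_D$ together with the overall minus sign in the definition of $S$; the sign cancels the factor $-2$ and produces $S(p)=2\,\tau_D\big((\delta_X P)^2+(\delta_Y P)^2+(\delta_Z P)^2\big)$, as asserted. I do not expect a genuine obstacle: the identity is purely algebraic, and neither self-adjointness of $D$ nor the projection relations $P=P^2=P^*$ are needed to derive the formula. The only places deserving care are the bookkeeping of the factors of $i$ and the signs inherited from \eqref{eq:commutator}, and a closing remark that, since $P=P^*$ and each $\delta_j$ is a $*$-derivation, every $\delta_j P$ is self-adjoint; hence $(\delta_j P)^2\ge 0$ and $S(p)\ge 0$, confirming that $S$ is a bona fide nonnegative energy functional even though positivity plays no role in the computation itself.
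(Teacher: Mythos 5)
Your proposal is correct and follows essentially the same route as the paper: the commutator identity \eqref{eq:commutator}, squaring $[D,\tilde{\pi}(P)]$, and then the trace bookkeeping over $\CC^2$ followed by $\tau_D$. The only (minor) divergence is in how the cross terms are discarded — you use that $\sigma_j\sigma_k$ for $j\neq k$ is traceless, while the paper first groups them into commutators $[\delta_j P,\delta_k P]\otimes\sigma_l$ and invokes $\tau_D$ of a commutator being zero; both justifications are valid, and yours is arguably the more direct one since the matrix trace alone already annihilates those terms.
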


\begin{proof}
Let $D$ denote the Dirac operator given in Theorem \ref{thm:spectral tripple} which is given by
\[
D(f\otimes u)=\sum_{j=X,Y,Z} i \, \tilde{\delta_j}(f)\otimes \sigma_j(u),
\]
where $\sigma_j$'s are the Pauli spin matrices given in \eqref{eq:Pauli_matrix}.

As we saw in \eqref{eq:commutator}, for any $P\in \DD$ we have that
\[
    [D,\tilde{\pi}(P)]=\tilde{\pi}(DP).\]
%For $a\in \DD$, we first compute $[D,a]$ as follows:
%\[\begin{split}
%&[D,a](f\otimes u)=Da(f\otimes u)-aD(f\otimes u)=D(af\otimes u)-aD(f\otimes u)\\
%&=\sum_{j=X,Y,Z} i\delta_j(af)\otimes \sigma_j(u)-a\sum_{j=X,Y,Z} i\delta_j(f)\otimes \sigma_j(u)\\
%&=\sum_{j=X,Y,Z} i(\delta_j(a) f+a\delta_j(f))\otimes \sigma_j(u)-a\sum_{j=X,Y,Z} i\delta_j(f)\otimes \sigma_j(u)\\
%&=\sum_{j=X,Y,Z} i(\delta_j(a) f) \otimes \sigma_j(u).
%\end{split}\]
Using that $\tilde{\pi}$ is a representation on $L^2([0,1]\times\mathbb{T}\times\mathbb{Z})$ we get that
\[
    [D,\tilde{\pi}(P)] [D,\tilde{\pi}(P)]=\tilde{\pi}(DP)\tilde{\pi}(DP)=\tilde{\pi}((DP)^2).\]
Computing $(DP)^2$ we get
\[\begin{split}
(DP)^2&= \left(\sum_{j=X,Y,Z} i(\delta_j(P)) \otimes \sigma_j\right)^2\\
&=\sum_{k=X,Y,Z} i\delta_k(P) (i\delta_X(P))\otimes \sigma_k(\sigma_X) + \sum_{k=X,Y,Z} i\delta_k(P) (i\delta_Y(P))\otimes \sigma_k(\sigma_Y) \\
&\quad\quad\quad + \sum_{k=X,Y,Z} i\delta_k(P) (i\delta_Z(P))\otimes \sigma_k(\sigma_Z)\\
&=-(\delta_X(P))^2 \otimes \sigma_X^2 -\delta_X(P)\delta_Y(P) \otimes \sigma_X\sigma_Y -\delta_X(P)\delta_Z(P)  \otimes \sigma_X \sigma_Z \\
&\quad-\delta_Y(P)\delta_X(P)\otimes \sigma_Y\sigma_X-(\delta_Y(P))^2\otimes \sigma^2_Y -\delta_Y(P)\delta_Z(P) \otimes \sigma_Y\sigma_Z\\
&\quad -\delta_Z(P)\delta_X(P)  \otimes \sigma_Z\sigma_X-\delta_Z(P)\delta_Y(P)\sigma_Z\sigma_Y-(\delta_Z(P))^2 \otimes \sigma_Z^2\\
&=-((\delta_X(P))^2 +(\delta_Y(P))^2+(\delta_Z(P))^2)\otimes \id -i[\delta_X(P),\delta_Y(P)] \otimes \sigma_Z -i[\delta_Z(P),\delta_X(P)  ]\otimes \sigma_Y \\
&\quad -i[\delta_Y(P),\delta_Z(P)] \otimes \sigma_X.\\
\end{split}\]
Noting that $\tau_D([\delta_i(P),\delta_j(P)])=0$
for all $i,j\in \{X,Y,Z\}$ gives that all the commutator terms will vanish. Hence we will continue the computation with only the term
%{\color{red}After taking trace, then we get trace of the previous is the same as the trace of the following.}
\[
-((\delta_X(P))^2+(\delta_Y(P))^2+(\delta_Z(P))^2) \otimes \begin{pmatrix} 1 & 0 \\ 0 & 1 \end{pmatrix} .
\]
Thus 
\[\begin{split}
\psi_2(\Id, P,P)&= \operatorname{Tr}(\Id [\tilde{D},P][\tilde{D},P])=\operatorname{Tr}\left(  \sum_{j=X,Y,Z} \delta_j(P))^2 \otimes \Id_{\CC^2}\right)\\
&=-2 ((\delta_X P)^2 + (\delta_Y P)^2+ (\delta_Z P)^2).
\end{split}
\]
Therefore we have
\[
S(P):=-\tau_D(\psi_2(\Id, P, P))=2\tau_D((\delta_X P)^2 + (\delta_Y P)^2+ (\delta_Z P)^2).\qedhere
\]

\end{proof}

The \textit{topological charge} of a projection $P\in \DD$ is given by
\[
 c_\nabla(P):=c_{XY}(P)+c_{XZ}(P)+c_{YZ}(P), 
\]
where 

\[
c_{VW} (P) =\frac{1}{2\pi i}\,  \tau_D (P(\delta_V P \,\delta_W P-\delta_W P \,\delta_V P)) \quad \text{for $V,W\in \mathfrak{h}$.}
\]
Then one can show that the topological charge gives a lower bound of the action functional $S$ as follows.

\begin{lem}\label{lem:1} (C.f. Proposition~2.1 of \cite{DLL})
For a projection $P\in \DD$, we have that
\[
S(P) \ge |c_{XY}(P)| + |c_{YZ}(P)|+|c_{XZ}(P)| \ge  |c_\nabla(P)|.
\]
\end{lem}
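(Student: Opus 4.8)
The plan is to establish the two inequalities separately, the second being trivial and the first being the crux. For the easy second inequality, $|c_{XY}(P)|+|c_{YZ}(P)|+|c_{XZ}(P)|\ge |c_\nabla(P)|$, I would simply invoke the triangle inequality applied to the definition $c_\nabla(P)=c_{XY}(P)+c_{YZ}(P)+c_{XZ}(P)$, since each summand is a real number (each $c_{VW}(P)$ is real because $P(\delta_V P\,\delta_W P-\delta_W P\,\delta_V P)$ has a real trace under $\tau_D$, by self-adjointness considerations). So the genuine content is the first inequality $S(P)\ge |c_{XY}(P)|+|c_{YZ}(P)|+|c_{XZ}(P)|$.

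For the main inequality I would treat each pair $(V,W)\in\{(X,Y),(Y,Z),(X,Z)\}$ separately and prove $S(P)\ge 3|c_{VW}(P)|$ is \emph{not} what I want; rather I would bound the relevant pieces of $S(P)$ by the corresponding charge. The standard trick (following Proposition~2.1 of \cite{DLL}) is a positivity argument. Starting from the projection identity $P\,\delta_V P\,P=0$, which follows by differentiating $P=P^2$ and multiplying by $P$ on both sides, one rewrites $c_{VW}(P)$ so that the trace involves only off-diagonal pieces $P\,\delta_V P\,(1-P)$. Concretely I would use $\delta_V P=P\,\delta_V P+\delta_V P\,P$ together with $P\,\delta_V P\,P=0$ to split each derivative into its $P(\cdot)(1-P)$ and $(1-P)(\cdot)P$ blocks, and then express both $\tau_D((\delta_V P)^2)$ and $c_{VW}(P)$ in terms of these blocks.

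The heart of the argument is then a Cauchy--Schwarz or completing-the-square estimate. For fixed $(V,W)$ I would consider the element $\delta_V P\pm i\,\delta_W P$ (or an appropriate combination adapted to the skew-symmetric structure) and use positivity of $\tau_D(A^*A)\ge 0$ for $A=(1-P)(\delta_V P\mp i\,\delta_W P)P$ or the analogous block. Expanding $\tau_D(A^*A)\ge 0$ produces exactly $\tau_D((\delta_V P)^2)+\tau_D((\delta_W P)^2)$ on the diagonal against $\pm 2\pi i\,c_{VW}(P)$ on the cross term, yielding
\[
\tau_D\big((\delta_V P)^2\big)+\tau_D\big((\delta_W P)^2\big)\ge 2\pi\,|c_{VW}(P)|.
\]
Summing over the three pairs and comparing with $S(P)=2\tau_D((\delta_X P)^2+(\delta_Y P)^2+(\delta_Z P)^2)$ from Proposition~\ref{prop:S}, each squared-derivative term $\tau_D((\delta_j P)^2)$ appears in exactly two of the three pairs, so the sum of the three pairwise bounds reproduces $S(P)$ up to the correct constant and gives $S(P)\ge |c_{XY}(P)|+|c_{YZ}(P)|+|c_{XZ}(P)|$.

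The main obstacle I anticipate is bookkeeping with the traces and the factors of $2\pi i$: one must verify that each $c_{VW}(P)$ is indeed real (so that taking absolute values is meaningful and the positivity inequality is sharp), and that the normalization constants match so that the two-to-one covering of the diagonal terms by the three pairs yields precisely the factor in $S(P)$ without slack. A secondary subtlety is justifying the integration-by-parts/trace-property steps $\tau_D(\delta_V(A)B)=-\tau_D(A\,\delta_V(B))$ used to move derivatives, which rely on $\tau_D$ being $\delta_V$-invariant; this is where $L$-invariance of the trace $\tau_D$ enters, and I would flag it explicitly rather than treat it as automatic.
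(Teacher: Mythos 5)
Your proposal is correct and takes essentially the same route as the paper: the paper's proof likewise decomposes $S(P)$ into the three pairwise terms $\tau_D\big((\delta_V P)^2+(\delta_W P)^2\big)$ (each squared derivative appearing in exactly two pairs) and applies Proposition~2.1 of \cite{DLL} to each pair, with the final step being the triangle inequality. The only difference is that the paper invokes that proposition as a black box, whereas you unpack its Belavin--Polyakov-style positivity argument, which in fact yields the stated bound with room to spare (a factor of $2\pi$); note also that the purely algebraic argument needs only traciality and the Leibniz rule, so the $\delta_V$-invariance of $\tau_D$ you flag is not actually required here.
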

\begin{proof}
By Proposition~\ref{prop:S} above, we know that
\[
S(P)= 2 \,\tau_D\big((\delta_X P)^2 + (\delta_Y P)^2+ (\delta_Z P)^2\big)
\]
We decompose $S(P)$ as follows: 
\[S(P)=\tau_D \Big((\delta_X P)^2 + (\delta_Y P)^2\Big)+\tau_D \Big((\delta_X P)^2 + (\delta_Z P)^2\Big) +\tau_D\Big((\delta_Y P)^2+(\delta_Z P)^2\Big).\]

Subsequent application of Proposition~2.1 of \cite{DLL} to each of the terms yields that \[S(P)\ge |c_{XY}(P)|+|c_{XZ}(P)|+|c_{YZ}(P)| \ge |c_\nabla(P)|.\qedhere\]
\end{proof}

By a similar argument to the one given in Proposition~3.2 of \cite{DLL}, one can verify the following: In case that $P$ is of the form $\langle R, R \rangle_R^D$ for an appropriately chosen $R\in\Xi$ one can express topological charges in terms of  curvatures $\Theta_\nabla(X,Y)$, $\Theta_\nabla(X,Z)$, and $\Theta_\nabla(Y,Z)$.

\begin{thm} \label{prop:1} (C.f. \cite[Proposition~3.2]{DLL})
Assume that $R\in \Xi$ is such that $\langle R, R \rangle_R^D=P$ is a projection and $\langle R, R \rangle_L^E=\Id_E$. Let $\nabla$ be a compatible connection and denote the curvature of $\nabla$ by $\Theta_\nabla$.
Then the topological charge of the projection $P$ is given by
\[
c_\nabla(P)=-\frac{1}{2\pi i} \tau_E(\langle R, (\Theta_\nabla(X,Y) +\nabla_{[X,Y]} +\Theta_\nabla(X,Z)+\Theta_\nabla(Y,Z))\cdot R\rangle_L^E).
\]
\end{thm}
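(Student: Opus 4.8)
The plan is to work directly with a general compatible connection $\nabla$, so that no separate connection-independence argument is needed. The starting point is the reproducing identity: since $\langle R,R\rangle_L^E=\Id_E$, the imprimitivity relation \eqref{eq:bimodule-asso} gives $\xi=\langle R,R\rangle_L^E\cdot\xi=R\cdot\langle R,\xi\rangle_R^D$ for every $\xi\in\Xi$. I would first rewrite the claimed expression using the bracket relations $[X,Y]=cZ$, $[X,Z]=[Y,Z]=0$: since $\Theta_\nabla(V,W)=[\nabla_V,\nabla_W]-\nabla_{[V,W]}$, we have $\Theta_\nabla(X,Y)+\nabla_{[X,Y]}=[\nabla_X,\nabla_Y]$, while $\Theta_\nabla(X,Z)=[\nabla_X,\nabla_Z]$ and $\Theta_\nabla(Y,Z)=[\nabla_Y,\nabla_Z]$. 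Hence it suffices to prove the term-by-term identity
\[
c_{VW}(P)=-\frac{1}{2\pi i}\,\tau_E(\langle R,[\nabla_V,\nabla_W]R\rangle_L^E)
\]
for each pair $(X,Y),(X,Z),(Y,Z)$ and then sum; here $c_{VW}(P)=\frac{1}{2\pi i}\tau_D(P(\delta_V P\,\delta_W P-\delta_W P\,\delta_V P))$ depends only on $P$.

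The bridge between the two sides is the element $B_V:=\langle R,\nabla_V R\rangle_R^D\in\DD$. Applying the reproducing identity to $\xi=\nabla_V R$ gives $\nabla_V R=R\cdot B_V$ and $PB_V=B_V$ (equivalently $B_V^*P=B_V^*$). Right-compatibility \eqref{eq:conne-comp-right} yields $\delta_V P=\delta_V\langle R,R\rangle_R^D=B_V^*+B_V$, where $\langle\nabla_V R,R\rangle_R^D=B_V^*$; combining this with the projection identity $P(\delta_V P)P=0$ produces the further relation $B_VP=-PB_V^*$. Next I would reduce the right-hand side to the $B_V$'s by integration by parts: left-compatibility \eqref{eq:conne-comp-left} together with $\tau_E\circ\delta_V=0$ gives $\tau_E\langle R,\nabla_V\nabla_W R\rangle_L^E=-\tau_E\langle\nabla_V R,\nabla_W R\rangle_L^E$, and then the trace relation $\tau_E\langle\xi,\eta\rangle_L^E=\tau_D\langle\eta,\xi\rangle_R^D$ with $\langle RB_W,RB_V\rangle_R^D=B_W^*PB_V=B_W^*B_V$ converts the right side into $-\tau_D(B_W^*B_V)$. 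Subtracting the $V\leftrightarrow W$ analogue gives
\[
\tau_E\langle R,[\nabla_V,\nabla_W]R\rangle_L^E=\tau_D(B_V^*B_W)-\tau_D(B_W^*B_V).
\]

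It remains to match this with $c_{VW}(P)$. Expanding $\tau_D(P\,\delta_V P\,\delta_W P)$ via $\delta_V P=B_V+B_V^*$ and the relations above, one checks $P\,\delta_V P=B_V(\Id-P)$ and $(\Id-P)B_W=0$, $(\Id-P)B_W^*=B_W^*+B_WP$, whence $P\,\delta_V P\,\delta_W P=B_VB_W^*+B_VB_WP$ and $\tau_D(P\,\delta_V P\,\delta_W P)=\tau_D(B_VB_W^*)+\tau_D(B_VB_W)$. Antisymmetrizing in $V,W$, the symmetric pieces $\tau_D(B_VB_W)=\tau_D(B_WB_V)$ cancel by cyclicity, leaving $\tau_D(P(\delta_V P\,\delta_W P-\delta_W P\,\delta_V P))=\tau_D(B_VB_W^*)-\tau_D(B_WB_V^*)$. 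A last use of cyclicity rewrites this as $-(\tau_D(B_V^*B_W)-\tau_D(B_W^*B_V))$, which is exactly $-\tau_E\langle R,[\nabla_V,\nabla_W]R\rangle_L^E$ by the previous display. This establishes the term-by-term identity; summing over $(X,Y),(X,Z),(Y,Z)$ and invoking the rewriting of the first paragraph yields the stated formula for $c_\nabla(P)$.

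The conceptual heart is the integration-by-parts step, where both compatibility conditions and the $\delta$-invariance of $\tau_E$ enter and produce the curvature; once the identities for $B_V$ are in place this is routine. I expect the principal difficulty to be bookkeeping rather than ideas: one must keep straight the conjugate-linearity conventions of the two inner products, the adjoints arising when passing between $\langle\cdot,\cdot\rangle_R^D$ and $\langle\cdot,\cdot\rangle_L^E$ through the trace relation, and the signs in $PB_V=B_V$ and $B_VP=-PB_V^*$. Verifying $\tau_E\circ\delta_V=0$ and the adjointness $\langle\nabla_V R,R\rangle_R^D=B_V^*$ with care is essential, since a sign slip there would propagate into the final formula.
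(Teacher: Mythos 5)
Your proposal is correct and takes essentially the same approach as the paper: the paper's own (sketched) proof, deferring details to \cite[Proposition~3.2]{DLL}, likewise establishes the term-by-term identity $c_{VW}(P)=-\frac{1}{2\pi i}\tau_E(\langle R,[\nabla_V,\nabla_W]\cdot R\rangle_L^E)$ using compatibility of $\nabla$ with both inner products together with $\langle R,R\rangle_L^E=\Id_E$, and then sums over the pairs $(X,Y)$, $(X,Z)$, $(Y,Z)$ via $[X,Y]=cZ$ and $[X,Z]=[Y,Z]=0$. Your write-up simply supplies the bookkeeping (the identities for $B_V=\langle R,\nabla_V R\rangle_R^D$, integration by parts using $\tau_E\circ\delta_V=0$, and the trace relation $\tau_E\langle\xi,\eta\rangle_L^E=\tau_D\langle\eta,\xi\rangle_R^D$) that the paper leaves to the citation.
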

\begin{proof}
We adapt the arguments of \cite[Proposition~3.2]{DLL}, where the topological charge of a projection and the latter is expressed in terms of the curvature of a connection on a projective module. Since $P=\langle R, R \rangle_R^D$ we can express \[c_{XY}=\frac{1}{2\pi i}\tau_D\Big(P\Big(\delta_X(P)\delta_Y(P)-\delta_Y(P)\delta_X(P)\Big)\Big)\] on the form 
\[-\frac{1}{2\pi i} \tau_E(\langle R, ([\nabla_X, \nabla_Y] \cdot R)\rangle_L^E),\] where we use that $\nabla$ is compatible with the both $\langle\cdot, \cdot\rangle_{R}^D$ and $\langle\cdot, \cdot\rangle_{L}^E$ on $\Xi$. 
If we proceed in the same way for $c_{YZ}(P)$ and $c_{XZ}(P)$, then we obtain the desired identity for $c_\nabla(P)$.
\end{proof}

Note that the functions $R$ are exactly the ones used to define the Grassmannian connection given in \eqref{eq:Gr-conn}.

We show below that the lower bound of the functional $S$ depends on the structure of the compatible connection. This result is quite unexpected and is due to the fact that such connections are non-constant. Hence, one does not observe this kind of behavior in the case of noncommutative tori where all compatible connections on Heisenberg modules have constant curvature.

\begin{thm}
Let $R\in \Xi$ be such that $\langle R, R \rangle_R^D=P$ is a projection and $\langle R, R \rangle_L^E=\Id_E$. Let $\nabla$ be a Yang-Mills connection on $\Xi$ with curvature 
\[
\Theta_\nabla(X,Y)=0, \quad \Theta_\nabla(X,Z)=0, \quad \text{and} \quad \Theta_\nabla(Y,Z)=\frac{\pi i}{\mu} \Id_E.
\]
 Then
\[
S(Q) \ge  \Big| \frac{c}{2 \pi i} \, \tau_E(\langle R, \nabla_Z(R) \rangle_L^E) -1 \Big|.
\]

\end{thm}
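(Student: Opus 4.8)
The plan is to combine the topological lower bound of Lemma~\ref{lem:1} with the curvature formula for the topological charge in Theorem~\ref{prop:1}, and then specialize to the prescribed Yang--Mills curvature. First I would invoke Lemma~\ref{lem:1} to get $S(Q)\ge |c_\nabla(Q)|$, where $Q=\langle R,R\rangle_R^D$ is the projection in the hypothesis. It then suffices to evaluate $c_\nabla(Q)$ and show its modulus equals the claimed right-hand side. Applying Theorem~\ref{prop:1} to $R$ gives
\[
c_\nabla(Q)=-\frac{1}{2\pi i}\tau_E\bigl(\langle R,(\Theta_\nabla(X,Y)+\nabla_{[X,Y]}+\Theta_\nabla(X,Z)+\Theta_\nabla(Y,Z))\cdot R\rangle_L^E\bigr).
\]

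Next I would substitute the prescribed curvature. Since $\Theta_\nabla(X,Y)=\Theta_\nabla(X,Z)=0$, and since $[X,Y]=cZ$ forces $\nabla_{[X,Y]}=c\nabla_Z$, the operator acting on $R$ collapses to $c\nabla_Z(R)+\tfrac{\pi i}{\mu}\Id_E\cdot R$. The $\nabla_Z$ contribution is immediate: linearity of $\langle\cdot,\cdot\rangle_L^E$ in its second argument yields the term $c\,\tau_E(\langle R,\nabla_Z(R)\rangle_L^E)$, which is exactly what appears in the conclusion.

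The delicate point — and the step I expect to be the main obstacle — is the treatment of the curvature term $\tfrac{\pi i}{\mu}\Id_E\cdot R$. One must use the explicit left-action formula $(\Psi\cdot f)(x,y)=\sum_q\overline{\Psi}(x,y,q)f(x+q,y)$, which is \emph{conjugate-linear} in $\Psi$. Because of the complex conjugate, $\tfrac{\pi i}{\mu}\Id_E\cdot R=-\tfrac{\pi i}{\mu}R$, and this sign flip is precisely what produces the $-1$ (rather than $+1$) in the final bound. Getting this sign right is the crux of the argument; a naive linear reading of the action would give the wrong constant. Combining with linearity in the second slot and $\langle R,R\rangle_L^E=\Id_E$, one obtains $\tau_E(\langle R,\tfrac{\pi i}{\mu}\Id_E\cdot R\rangle_L^E)=-\tfrac{\pi i}{\mu}\tau_E(\Id_E)$.

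Finally I would compute $\tau_E(\Id_E)$ from \eqref{eq:trace_E}: since $\Id_E(x,y,0)=1$, one gets $\tau_E(\Id_E)=\int_0^{2\mu}\!\int_0^1 1\,dx\,dy=2\mu$, consistent with $\tau_E(\Id_E)=\tau_D(Q)$ via the defining relation of $\tau_E$. Hence the curvature term contributes $-\tfrac{\pi i}{\mu}\cdot 2\mu=-2\pi i$, and assembling the two pieces gives
\[
c_\nabla(Q)=-\frac{1}{2\pi i}\Bigl(c\,\tau_E(\langle R,\nabla_Z(R)\rangle_L^E)-2\pi i\Bigr)=-\frac{c}{2\pi i}\tau_E(\langle R,\nabla_Z(R)\rangle_L^E)+1.
\]
Taking absolute values, using $|a|=|-a|$, and inserting this into $S(Q)\ge|c_\nabla(Q)|$ yields the claimed inequality $S(Q)\ge\bigl|\tfrac{c}{2\pi i}\tau_E(\langle R,\nabla_Z(R)\rangle_L^E)-1\bigr|$.
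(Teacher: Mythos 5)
Your proposal is correct and follows essentially the same route as the paper's proof: combine Lemma~\ref{lem:1} with Theorem~\ref{prop:1}, substitute the prescribed curvature and $\nabla_{[X,Y]}=c\nabla_Z$, use the conjugate-linearity of the left action to get $\bigl(\tfrac{\pi i}{\mu}\Id_E\bigr)\cdot R=-\tfrac{\pi i}{\mu}R$, and evaluate $\tau_E(\Id_E)=2\mu$ to produce the constant term. In fact, you make explicit the sign flip coming from the conjugation in the left-action formula, which the paper uses silently in passing from $\langle R,\tfrac{\pi i}{\mu}R\rangle_L^E$ to $-\tfrac{\pi i}{\mu}\langle R,R\rangle_L^E$.
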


\begin{proof}
By Lemma~\ref{lem:1} and Proposition~\ref{prop:1}, we get 
\[
S(Q)\ge |c_\nabla(Q)| = \Big| -\frac{1}{2\pi i} \tau_E(\langle R, (\Theta_\nabla(X,Y) +\nabla_{[X,Y]} +\Theta_\nabla(X,Z)+\Theta_\nabla(Y,Z))\cdot R\rangle_L^E) \Big|
\]
Since $[X,Y]=c\, Z$ and $\Theta_\nabla(X,Y)=0$,  $\Theta_\nabla(X,Z)=0$ and $\Theta_\nabla(Y,Z)=\frac{\pi i}{\mu} \Id_E$, we have 
\[\begin{split}
|c_\nabla(Q)| &= \Big| -\frac{1}{2\pi i} \tau_E(\langle R, (c \, \nabla_Z +\frac{\pi i}{\mu}\, \Id_E)\cdot R\rangle_L^E) \Big| \\
&=  \Big| -\frac{1}{2\pi i} \tau_E (\langle R,  c\, \nabla_Z (R)\rangle_L^E   + \langle R, \frac{\pi i}{\mu} R \rangle_L^E) \Big| \\
&= \Big| -\frac{1}{2\pi i} \tau_E (\langle R,  c\, \nabla_Z (R)\rangle_L^E  -\frac{1}{2 \pi i} \tau_E(-\frac{\pi i}{\mu} \langle R, R\rangle_L^E ) \Big| \\
&=  \Big| -\frac{c}{2\pi i} \tau_E (\langle R,   \nabla_Z (R)\rangle_L^E +\frac{1}{2\mu} \tau_E(\langle R, R \rangle_L^E) \Big|.
\end{split}\]
Since $\langle R, R \rangle_L^E=\Id_E=\delta_0(p)$ and $\tau_E(\Id_E)=2\mu$ (by using \eqref{eq:trace_E}), we have
\[
S(Q)\ge |c_\nabla(Q)| = \Big| -\frac{c}{2\pi i} \tau_E (\langle R,   \nabla_Z (R)\rangle_L^E +1\Big|,
\]
which gives the desired result.
\end{proof}

\begin{example}\label{Ex:1}
In this example, we compute the lower bound of $S(P)$ explicitly with the Yang-Mills connection $\nabla^0$ given in \eqref{eq:Lee_conn}, where $P$ is the projection with $P=\langle R, R \rangle_R^D$ and $\langle R, R \rangle_L^E=\Id_E$. In particular, $R$ can be set to the special function given in \cite{Kang1}.

As noted in Section \ref{sec:prelim}, the curvature of $\nabla^0$ is given by 
\[
\Theta_{\nabla^0}(X,Y)=0,\;\;\Theta_{\nabla^0}(X,Z)=0,\;\;\Theta_{\nabla^0}(Y,Z)=\frac{\pi i}{\mu}\Id_E,
\]
and $\nabla^0_Z$ is given by
\[
(\nabla^0_Z\xi)(x,y)=\frac{\pi i x}{\mu}\xi(x,y).
\]
Using the formula of $\tau_E$ in \eqref{eq:trace_E} and $E$-valued inner product in \eqref{E-value-inner}, we have
\begin{equation}\label{eq:trace_int}
\begin{split}
&-\frac{c}{2\pi i} \tau_E(\langle R, \nabla_Z^0(R)\rangle_L^E)\\
&= -\frac{c}{2\pi i} \int_0^{2\mu} \int_0^1 \langle R, \nabla^0_Z(R)\rangle_L^E (x,y,0)\, dx\, dy\\
&=-\frac{c}{2\pi i} \int_0^{2\mu} \int_0^1 \sum_{k\in \mathbb{Z}} \overline{R}(x-2 k \mu, y-2 k \nu) \frac{\pi i}{\mu} (x-2 k \mu) R(x-2 k \mu, y-2 k \nu) \, dx \, dy \\
&=-\frac{c}{2\pi i} \frac{\pi i}{\mu} \int_0^{2\mu} \int_0^1\sum_{k\in \mathbb{Z}} (x-2 k \mu) |R(x-2 k \mu, y-2 k \nu) |^2 \, dx\, dy \\
&=-\frac{c}{2\mu} \int_0^{2\mu} \int_0^1\sum_{k\in \mathbb{Z}} (x-2 k \mu) |R(x-2 k \mu, y-2 k \nu) |^2 \, dx\, dy \\
& =- \frac{c}{2\mu} \int_0^{2\mu} \int_0^1  x \sum_{k\in \mathbb{Z}}  |R(x-2 k \mu, y-2 k \nu) |^2 \, dx\, dy \\
& \quad +  \frac{c}{2\mu} \int_0^{2\mu} \int_0^1\sum_{k\in \mathbb{Z}} 2 k \mu |R(x-2 k \mu, y-2 k \nu) |^2 \, dx\, dy. 
\end{split}
\end{equation}

Since $\sum_{k\in \Z} |R(x-2 k \mu, y-2 k \nu) |^2 =1$ by (d-1) of \cite{Kang1}, the first integral above gives
\[
- \frac{c}{2\mu} \int_0^{2\mu} \int_0^1  x \sum_{k\in \mathbb{Z}}  |R(x-2 k \mu, y-2 k \nu) |^2 \, dx\, dy=-\frac{c}{2}.
\]

Hence we are only left with computing the value of the second integral. Beginning with rewriting the sum
\begin{equation}\label{eq:sum1}
\begin{split}
 & \sum_{k=-n}^n 2 k \mu |R(x-2 k \mu, y-2 k \nu) |^2 \\
 & =  \sum_{k=1}^n 2 k \mu |R(x-2 k \mu, y-2 k \nu) |^2 + \sum_{k=-n}^{-1} 2 k \mu |R(x-2 k \mu, y-2 k \nu) |^2.
\end{split}
\end{equation}

Recall the the summation by parts formula:
Let $\{a_n\}, \{b_n\}$ be sequences and let $S_N=\sum_{n=1}^N a_n$. Then for $0\le m \le n$ we have
\begin{equation}\label{eq:PS}
\sum_{k=m}^n a_k b_k = (S_n b_n-S_{m-1} b_m)+\sum_{k=m}^{n-1} S_k(b_k-b_{k+1}).
\end{equation}
By letting $a_k=|R(x-2 k \mu, y-2 k \nu) |^2$, $b_k=2 k \mu$, $S_N=\sum_{k=1}^n a_n$ and applying \eqref{eq:PS} with $m=1$ to the first summation of \eqref{eq:sum1}, we get
\[\begin{split}
&\sum_{k=1}^n 2 k \mu |R(x-2 k \mu, y-2 k \nu) |^2 \\
&=\sum_{k=1}^n  |R(x-2 k \mu, y-2 k \nu) |^2 \, (2 n \mu) + \sum_{k=1}^{n-1} \sum_{j=1}^k  |R(x-2 j \mu, y-2 j \nu) |^2 (2 k \mu-2 (k+1)\mu) \\
&=2 n \mu \sum_{k=1}^n  |R(x-2 k \mu, y-2 k \nu) |^2-2 \mu \sum_{k=1}^{n-1} \sum_{j=1}^k  |R(x-2 j \mu, y-2 j \nu) |^2. 
\end{split}\]

Similarly, by applying the same result to the second summation of \eqref{eq:sum1}, we obtain
\[\begin{split}
&\sum_{k=-n}^{-1} 2 k \mu  |R(x-2 k \mu, y-2 k \nu) |^2=-\sum_{k=1}^n 2 k \mu  |R(x+2 k \mu, y+2  k \nu) |^2 \\
&=-2 n \mu \sum_{k=1}^n  |R(x+2 k \mu, y+2  k \nu) |^2 + 2 \mu \sum_{k=1}^{n-1} \sum_{j=1}^k  |R(x+2 k \mu, y+2  k \nu) |^2.
\end{split}\]

%{\color{red} 
Now we apply the conditions on $R$ given in \eqref{eq:R-cond}, then we have
\[
\begin{split}
&\sum_{k=1}^n 2 k \mu |R(x-2 k \mu, y-2 k \nu) |^2 \\
&=2 n \mu \sum_{k=1}^n  |R(x-2 k \mu, y-2 k \nu) |^2-2 \mu \sum_{k=1}^{n-1} \sum_{j=1}^k  |R(x-2 j \mu, y-2 j \nu) |^2 \\
&=2 n \mu |R(x-2 \mu)|^2-2 \mu \{(n-1) |R(x-2\mu)|^2\}\\
&=2 \mu |R(x-2 \mu)|^2 \\
&=2 \mu (1-|R(x)|^2).
\end{split}
\]
Similarly, we have
\[\begin{split}
&\sum_{k=-n}^{-1} 2 k \mu  |R(x-2 k \mu, y-2 k \nu) |^2=-\sum_{k=1}^n 2 k \mu  |R(x+2 k \mu, y+2  k \nu) |^2 \\
&=-2 n \mu \sum_{k=1}^n  |R(x+2 k \mu, y+2  k \nu) |^2 + 2 \mu \sum_{k=1}^{n-1} \sum_{j=1}^k  |R(x+2 k \mu, y+2  k \nu) |^2\\
&=-2 n \mu |R(x+2 \mu)|^2+2 (n-1) |R(x+2 \mu)|^2\\
&=-2 \mu |R(x+2\mu)|^2\\
&=0.
\end{split}\]
%}

%{\color{blue} 
By combining the above two equations, we obtain 
\[\begin{split}
 & \sum_{k=-n}^n 2 k \mu |R(x-2 k \mu, y-2 k \nu) |^2 \\
 & =  \sum_{k=1}^n 2 k \mu |R(x-2 k \mu, y-2 k \nu) |^2 + \sum_{k=-n}^{-1} 2 k \mu |R(x-2 k \mu, y-2 k \nu) |^2  \\
 &=2 \mu (1-|R(x)|^2).
\end{split}\]

Thus the second integral of the trace in \eqref{eq:trace_int} gives
\[\begin{split}
&\frac{c}{2\mu} \int_0^{2\mu} \int_0^1\sum_{k\in \mathbb{Z}} 2 k \mu |R(x-2 k \mu, y-2 k \nu) |^2 \, dx\, dy \\
&=\frac{c}{2\mu} \int_0^{2\mu} \int_0^1 \lim_{n\to \infty} \sum_{k=-n}^n (2 k \mu) |R(x-2 k \mu, y-2 k \nu) |^2 \, dx\, dy\\
&= \frac{c}{2\mu} \int_0^{2\mu} \int_0^1 2 \mu (1-|R(x)|^2)\; dx\, dy \\
&=\frac{c}{2\mu} 2\mu 2 \mu \int_0^1 (1-|R(x)|^2) \,dx\\
&=2 c \mu  \int_0^1 (1-|R(x)|^2) \, dx.
\end{split}\]

If we let $M= 2 c \mu  \int_0^1 (1-|R(x)|^2) \,dx$, then we have
\begin{equation}\label{eq:S(Q)-nabla0}
\begin{split}
S(Q) & \ge  | 1 -\frac{c}{2\pi i} \tau_E (\langle R,   \nabla_Z (R)\rangle_L^E |=|1-\frac{c}{2} +M |.
 \end{split}
\end{equation}
%}

\end{example}

\begin{example}
Here we compute the lower bound of $S(Q)$ explicitly with another set of Yang-Mills connections $\nabla^1=\nabla^0+\mathbb{H}$, where $\nabla^0$ is the connection in \eqref{eq:Lee_conn} and $\mathbb{H}$ is the linear map from $\mathfrak{h}$ to the set of skew-symmetric elements of $\EE$ given in \eqref{eq:skew-H}. Note that $\mathbb{H}$ is given by
\begin{equation}
\begin{split}
&\mathbb{H}_X (x,y,p)= i\, g_1(y) \delta_0(p)\\
&\mathbb{H}_Y(x,y,p)= i\, g_2(x) \delta_0(p)\\
&\mathbb{H}_Z(x,y,p)=0,
\end{split}
\end{equation}
where $(x,y)\in \RR\times \TT$, and $g_1$, $g_2$ are real-valued differentiable functions satisfying $g_1(y)=g_1(y-2p\nu)$, $g_2(x)=g_2(x-2p\mu)$. There are many such functions $g_1$ and $g_2$.

As in \eqref{eq:curv-nabla-1}, the corresponding curvature of $\nabla^1$ is given by 
\[
\Theta_{\nabla^1}(X,Y)=0,\quad \Theta_{\nabla^1}(X,Z)=0,\quad \Theta_{\nabla^1}(Y,Z)=\frac{\pi i}{\mu}\Id_E.
\]
Also note that $\nabla_Z^1=\nabla_Z^0 + \mathbb{H}_Z$. Since $\mathbb{H}_Z=0$, $\nabla_Z^1=\nabla_Z^0$. Hence  the lower bound of the functional $S(Q)$ will be the same the that of $\nabla^0$ as in \eqref{eq:S(Q)-nabla0} of Example~\ref{Ex:1} above:
\begin{equation}\label{eq:S(Q)-nabla1}
S(Q) \ge |1-\frac{c}{2}  |.
\end{equation}

\end{example}

\begin{example}
Here we compute the lower bound of $S(Q)$ explicitly with the connection $\nabla^2$ given in \eqref{eq:nabla-2} that has constant curvature but neither attains minimum nor gives a critical point. As in \eqref{eq:curv-nabla-2}, the curvature is given by
\[
\Theta_{\nabla^2}(X,Y)=\nu i \Id_E,\quad \Theta_{\nabla^2}(X,Z)=0,\quad \Theta_{\nabla^2}(Y,Z)=\frac{\pi i}{\mu}\Id_E.
\]
Then Proposition~\ref{prop:1} given 
\[\begin{split}
c_\nabla(Q) &=-\frac{1}{2\pi i} \, \tau_E(\langle R, (\nu i \Id_E+c\nabla^2_Z + \frac{\pi i}{\mu} \Id_E)\cdot R\rangle_L^E)\\
&= -\frac{c}{2\pi i} \tau_E (\langle R,   \nabla^2_Z (R)\rangle_L^E +1 + \frac{\nu}{2\pi}.
\end{split}\]
Thus by Lemma~\ref{lem:1}, we have
\[
S(Q)\ge |c_\nabla(Q)| =  | -\frac{c}{2\pi i} \tau_E (\langle R,   \nabla^2_Z (R)\rangle_L^E) +1 + \frac{\nu}{2\pi}|.
\]
Since $\nabla_Z^2=\nabla_Z^0$, we have $\tau_E (\langle R,   \nabla^2_Z (R)\rangle_L^E) = \tau_E (\langle R,   \nabla^0_Z (R)\rangle_L^E)$, and hence
\begin{equation}\label{eq:S(Q)-nabla2}
S(Q) \ge |1 + \frac{\nu}{2\pi} -\frac{c}{2}  |.
\end{equation}

\end{example}

\begin{example}
Here we compute the lower bound of $S(Q)$ explicitly with the connection $\nabla^3$ with non-constant curvature given by $\nabla^3=\nabla^0+\mathbb{G}$, where $\mathbb{G}$ is given in \eqref{eq:G}. As shown in \eqref{eq:conn-nabla3}, the corresponding curvature is given by
\[
\Theta_{\nabla^3}(X,Y)=-c \mathbb{G}_Z,\quad \Theta_{\nabla^3}(X,Z)=0, \quad \Theta_{\nabla^3}(Y,Z)=\frac{\pi i}{\mu} \Id_E + \frac{\partial \mathbb{G}_Z}{\partial x}.
\]
Using Proposition~\ref{prop:1} and the fact that $\nabla^3_Z=\nabla^0_Z$, we have
\begin{equation}\label{eq:c-1-nabla3}\begin{split}
c_\nabla(Q)&=-\frac{1}{2\pi i} \tau_E(\langle R, (-c\mathbb{G}_Z+c \nabla^3_Z + \frac{\pi i}{\mu} \Id_E + \frac{\partial \mathbb{G}_Z}{\partial x} ) \cdot R \rangle_L^E)\\
&=1- \frac{c}{2\pi i} \tau_E(\langle R, \nabla_Z^0\rangle_L^E) + \frac{c}{2\pi i} \tau_E ( \langle R, \mathbb{G}_Z\cdot R\rangle_L^E) - \frac{1}{2 \pi i}\tau_E(\langle R, \frac{\partial \mathbb{G}_Z}{\partial x} \cdot R\rangle_L^E).
\end{split}\end{equation}
We compute the last two terms of the above equation as follows.
\[
A:=\frac{c}{2\pi i} \tau_E ( \langle R, \mathbb{G}_Z\cdot R\rangle_L^E)=\frac{c}{2\pi i} \int_0^{2\mu} \int_0^1 \langle R, \mathbb{G}_Z \rangle_L^E (x,y,0) \, dx\, dy.
\]
Note that 
\[
\langle R, \mathbb{G}_Z \rangle_L^E (x,y,0)=\sum_{k\in \mathbb{Z}} \overline{R}(x-2 k \mu, y-2 k \nu)(\mathbb{G}_Z\cdot R)(x-2 k \mu,  y-2 k \nu).
\]
Since $\mathbb{G}_Z(x,y,p)= i \cos(\frac{\alpha \pi x}{\mu}) \delta_0(p)$, we have
\[
(\mathbb{G}_Z\cdot R) (x-2 k \mu,  y-2 k \nu)=-i \cos \Big(\frac{\alpha \pi (x-2 k \mu)}{\mu}\Big) R(x-2 k \mu,  y-2 k \nu).
\]
Thus we have
\[
\langle R, \mathbb{G}_Z \rangle_L^E (x,y,0)=-i \sum_{k\in \mathbb{Z}} \cos \Big(\frac{\alpha \pi (x-2 k \mu)}{\mu}\Big) |R(x-2 k \mu,  y-2 k \nu)|^2,
\]
and hence
\[
A:=\frac{c}{2\pi i} \tau_E ( \langle R, \mathbb{G}_Z\cdot R\rangle_L^E) = -\frac{c}{2\pi} \int_0^{2\mu} \int_0^1 \sum_{k\in \mathbb{Z}} \cos \Big(\frac{\alpha \pi (x-2 k \mu)}{\mu}\Big) |R(x-2 k \mu,  y-2 k \nu)|^2 \, dx \, dy.
\]
To compute the last term of \eqref{eq:c-1-nabla3}, we let
\[
B:= - \frac{1}{2 \pi i}\tau_E(\langle R, \frac{\partial \mathbb{G}_Z}{\partial x} \cdot R\rangle_L^E)=-\frac{1}{2\pi i} \int_0^{2\mu} \int_0^1 \langle R, \frac{\partial \mathbb{G}_Z}{\partial x} \cdot R\rangle_L^E (x,y,0)\, dx\, dy.
\]
Note that $\frac{\partial \mathbb{G}_Z}{\partial x} (x,y,p)=-i \sin(\frac{\alpha \pi x}{\mu} \frac{\alpha \pi}{\mu} \delta_0(p)$. So we have
\[\begin{split}
\big\langle R, & \frac{\partial \mathbb{G}_Z}{\partial x} \cdot R\big\rangle_L^E (x,y,0)=\sum_{k\in \mathbb{Z}} \overline{R}(x-2 k \mu, y-2  k \nu) (\frac{\partial \mathbb{G}_Z}{\partial x} \cdot R)(x-2 k \mu, y-2  k \nu)\\
&=\sum_{k\in \mathbb{Z}} \overline{R}(x-2 k \mu, y-2  k \nu) i \sin\Big(\frac{\alpha \pi(x-2 k \mu)}{\mu}\Big) \frac{\alpha \pi}{\mu} R(x-2 k \mu, y-2  k \nu)\\
&=\frac{i \alpha \pi}{\mu} \sum_{k\in \mathbb{Z}} \sin\Big(\frac{\alpha \pi(x-2 k \mu)}{\mu}\Big) |R(x-2 k \mu, y-2  k \nu)|^2.
\end{split}\]

Thus 
\[\begin{split}
B&:= - \frac{1}{2 \pi i}\tau_E\big(\big\langle R, \frac{\partial \mathbb{G}_Z}{\partial x} \cdot R\big\rangle_L^E\big)\\
&=-\frac{1}{2 \pi i} \int_0^{2\mu} \int_0^1 \frac{i \alpha \pi}{\mu} \sum_{k\in \mathbb{Z}} \sin\Big(\frac{\alpha \pi(x-2 k \mu)}{\mu}\Big) |R(x-2 k \mu, y-2  k \nu)|^2 \, dx\, dy \\
&= -\frac{\alpha}{2\mu}  \int_0^{2\mu} \int_0^1 \sum_{k\in \mathbb{Z}} \sin\Big(\frac{\alpha \pi(x-2 k \mu)}{\mu}\Big) |R(x-2 k \mu, y-2  k \nu)|^2 \, dx\, dy. 
\end{split}\]

Therefore, the lower bound of $S(Q)$ with the non-constant curvature connection $\nabla^3$ is given by
\[\begin{split}
S(Q) &\ge |c_\nabla(Q)| =| 1- \frac{c}{2\pi i} \tau_E(\langle R, \nabla_Z^0\rangle_L^E) + \frac{c}{2\pi i} \tau_E ( \langle R, \mathbb{G}_Z\cdot R\rangle_L^E) - \frac{1}{2 \pi i}\tau_E(\langle R, \frac{\partial \mathbb{G}_Z}{\partial x} \cdot R\rangle_L^E) | \\
&= | 1- \frac{c}{2\pi i} \tau_E(\langle R, \nabla_Z^3\rangle_L^E) +A+B| \\
&= \Big|1- \frac{c}{2\mu} \int_0^{2\mu} \int_0^1\sum_{k\in \mathbb{Z}} (x-2 k \mu) |R(x-2 k \mu, y-2 k \nu) |^2 \, dx\, dy \\
& \quad -\frac{c}{2\pi} \int_0^{2\mu} \int_0^1 \sum_{k\in \mathbb{Z}} \cos \Big(\frac{\alpha \pi (x-2 k \mu)}{\mu}\Big) |R(x-2 k \mu,  y-2 k \nu)|^2 \, dx \, dy \\
& \quad -\frac{\alpha}{2\mu}  \int_0^{2\mu} \int_0^1 \sum_{k\in \mathbb{Z}} \sin \Big(\frac{\alpha \pi(x-2 k \mu)}{\mu}\Big) |R(x-2 k \mu, y-2  k \nu)|^2 \, dx\, dy \Big|.
\end{split}\]

%{\color{blue} 
Now we compute the last two integrals with the special function $R$ and particular values of $\mu$, $\alpha$ and $c$.
First choose a positive integer $N$ such that $\mu=\displaystyle{\frac{\pi}{ N}}<\frac{1}{2}$. %Note that $\hslash$ is the Planck constant.
Then choose $c=N$ and $\alpha=\displaystyle{\frac{c\mu}{\pi}}$. We compute
\[\begin{split}
&\sum_{k\in \mathbb{Z}} \cos \Big(\frac{\alpha \pi (x-2 k \mu)}{\mu}\Big) |R(x-2 k \mu,  y-2 k \nu)|^2 + \sum_{k\in \mathbb{Z}} \sin\Big(\frac{\alpha \pi(x-2 k \mu)}{\mu}\Big) |R(x-2 k \mu, y-2  k \nu)|^2\\
&=\cos \Big(\frac{\alpha \pi x}{\mu}\Big) |R(x)|^2 + \cos \Big(\frac{\alpha \pi x -\alpha \pi 2  \mu}{\mu}\Big) |R(x-2 \mu)|^2 + \sin\Big(\frac{\alpha \pi x}{\mu}\Big) |R(x)|^2 \\
&\quad + \sin \Big(\frac{\alpha \pi x -\alpha \pi 2  \mu}{\mu}\Big) |R(x-2 \mu)|^2\\
&=\cos (Nx) |R(x)|^2 + \cos (Nx-2\pi)|R(x-2 \mu)|^2 + \sin (Nx) |R(x)|^2 +\sin (Nx-2\pi) |R(x-2 \mu)|^2\\
&=(\cos (Nx) + \sin (Nx))|R(x)|^2 + (\cos (Nx-2\pi) + \sin (Nx-2\pi))|R(x-2 \mu)|^2\\
&=\sqrt{2} \sin \Big(Nx+\frac{\pi}{4}\Big)|R(x)|^2 +\sqrt{2} \sin \Big(Nx-2\pi +\frac{\pi}{4}\Big) (1-|R(x)|^2)\\
&=\sqrt{2} \sin \Big(Nx+\frac{\pi}{4}\Big)|R(x)|^2 + \sqrt{2} \sin\Big(Nx+\frac{\pi}{4}\Big)(1-|R(x)|^2)\\
&=\sqrt{2} \sin \Big(Nx+\frac{\pi}{4}\Big).
\end{split}\]

Thus we can simplify the last two integrals as follows.
\[\begin{split}
&-\frac{c}{2\pi} \int_0^{2\mu} \int_0^1 \sum_{k\in \mathbb{Z}} \cos \Big(\frac{\alpha \pi (x-2 k \mu)}{\mu}\Big) |R(x-2 k \mu,  y-2 k \nu)|^2 \, dx \, dy \\
& \quad -\frac{\alpha}{2\mu}  \int_0^{2\mu} \int_0^1 \sum_{k\in \mathbb{Z}} \sin\Big(\frac{\alpha \pi(x-2 k \mu)}{\mu}\Big) |R(x-2 k \mu, y-2  k \nu)|^2 \, dx\, dy \\
&=-\frac{N}{2\pi} \int_0^{\frac{2\pi}{ N}} \int_0^1 \sqrt{2} \sin \Big(Nx+\frac{\pi}{4}\Big)\, dx\, dy\\
&= (1+\sin N - \cos N).
\end{split}\]

%Therefore
%\[
%S(Q)\ge |1-\frac{N}{2} - \frac{1}{\hslash} (1+\sin N - \cos N)|
%\]

%}

\end{example}

\end{document}